\documentclass[a4paper,10pt]{siamltex} 
\usepackage{amsmath,amssymb,amsfonts,graphicx,epsfig,color,url}
\usepackage{latexsym,mathrsfs,cite} 

\usepackage{float}

\usepackage{hyperref} 
\usepackage{comment}
\usepackage[title,titletoc,toc]{appendix}


\definecolor{colorJRblue}{rgb}{0.,0.,1.}
\definecolor{colorJRred}{rgb}{1.,0.,0.}

\def\R{\mathbb{R}}
\def\C{\mathbb{C}}
\def\Z{\mathbb{Z}}
\def\NN{\mathbb{N}}
\def\N{\mathbb{N}_*}

\def\Lspace{{\sf L}}
\def\Hspace{{\sf H}}
\def\L{{\bf L}}
\def\d{d}
\def\crit{\mathrm{crit}}

\def\rmi{\mathrm{i}}

\def\ds{D_k}
\def\calO{\mathcal{O}}
\def\DT{\Delta T}

\newcommand{\uP}{u_1}
\newcommand{\uM}{u_2}

\newcommand{\vP}{v_1}
\newcommand{\vM}{v_2}

\overfullrule=0pt
\newtheorem{hypothesis}{Hypothesis}

\newtheorem{remark}{Remark}

\begin{document}
\title{Global stability and local bifurcations in a two-fluid model for tokamak plasma}
\author{
        D.\ Zhelyazov \thanks{Centrum Wiskunde \& Informatica, Science Park 123, 1098 XG Amsterdam. FOM Institute DIFFER - Dutch institute for fundamental energy research, Association EURATOM-FOM, P.O. Box 1207, 3430 BE Nieuwegein, the Netherlands} \and D.\ Han-Kwan \thanks{\'Ecole Normale Sup\'erieure, D\'epartement de Math\'ematiques et Applications, 45 rue d'Ulm, 75005 Paris, France} \and J.D.M.\ Rademacher \thanks{Centrum Wiskunde \& Informatica, Science Park 123, 1098 XG Amsterdam, the Netherlands} 
}

\maketitle

\begin{abstract}
We study a two-fluid description of high and low temperature components of the electron velocity distribution of an idealized  
tokamak plasma. We refine previous results on the laminar steady-state  solution. On the one hand, we prove global stability outside a  
parameter set of possible linear instability. On the other hand, for a large set of parameters, we prove the primary instabilities for  
varying temperature difference stem from the lowest spatial harmonics. We moreover show that any codimension-one bifurcation is a  
supercritical Andronov-Hopf bifurcation, which yields stable periodic solutions in the form of traveling waves. In the degenerate case, where the instability region in the temperature difference is a point, we prove that the bifurcating periodic orbits form an arc of stable periodic solutions.  We provide numerical simulations to illustrate and corroborate our analysis. These also suggest that the stable periodic orbit, which bifurcated
from the steady-state, undergoes additional bifurcations.
\end{abstract}

\section{Introduction}

In this paper we analyze the stability and primary local bifurcations of a  laminar steady state in the following two-fluid model for high and low temperature in a tokamak fusion plasma near the scrape-off layer. The model equations read
\begin{equation}\label{eq1}
\left\{
\begin{aligned}
\partial_{t} \rho^{+} &= T^{+} \partial_{x_2}\rho^{+} - E^{\perp} \cdot \nabla \rho^{+} + \nu \nabla^2 \rho^{+}, \\
\partial_{t} \rho^{-} &= T^{-} \partial_{x_2}\rho^{-} - E^{\perp} \cdot \nabla \rho^{-} + \nu \nabla^2 \rho^{-},\\
E &= -\nabla V,\\
-\nabla^2 V &= \rho^{+} + \rho^{-} - 1,
\end{aligned}
\right.
\end{equation}
where $E^{\perp} = \big( E_2, -E_1\big)^T$, $\nu>0$, and are posed in the cylindrical domain 
\[
x = (x_1, x_2) \in [0, L_{1}] \times \R/L_2 \Z,
\]
subject to the Dirichlet boundary conditions 
\begin{equation}
\begin{aligned}
V(0,x_2,t) &= V(L_1,x_2,t) = 0,\\
\rho^{\pm}(0,x_2,t) &= \rho_{ss}^{\pm}(0)  \mbox{ , } \rho^{\pm}(L_1, x_2,t) = \rho_{ss}^{\pm}(L_1),
\end{aligned}
\end{equation}
where $\rho_{ss}^{+}(x_1) := 1 - \frac{x_1}{L_1}$, $\rho_{ss}^{-}(x_1) := \frac{x_1}{L_1}$.

For $\nu=0$ and without the Dirichlet boundary conditions on $\rho^\pm$, this system has been derived in \cite{daniel1} ($L_1=L_2$ was chosen there), to which we refer for details on the model origins. Briefly, $\rho^\pm$ model miscible phases of `hot' and `cold' plasma with constant temperatures $T^{+}  > T^{-} >0$, and $V$ the electric potential, driving $\rho^\pm$ via the drift velocity $E \times B/|B|^2$ of all charged particles. The addition of viscous terms on the one hand allow to model additional physics by adding diffusion or dissipation; on the other hand, it changes the system from hyperbolic to parabolic, whose bifurcations are easier to analyze. It turns out that $\nu>0$ allows for richer destabilization scenarios. In order to relate our results with the hyperbolic system, we include an analysis of the case of small $\nu > 0$. For the benefit of a significant simplification of the analysis, we restrict in this paper to the case of equal viscosity for $\rho^\pm$.

The introduction of viscosity requires additional boundary conditions. The Dirichlet boundary conditions on $\rho^\pm$ are suitable in this context and helpful for our analysis, though in other physical contexts these may not be the right choice. Notably, the boundary conditions allow for the laminar steady state 
\begin{eqnarray}\label{e:rhoss}
\rho_{ss} = ( \rho_{ss}^+ , \rho_{ss}^-),
\end{eqnarray}
for which the electric potential and field vanish, and whose relevance for the system was noted in \cite{daniel1} for $\nu=0$. If $\nu>0$, it is in fact the only steady state that is independent of $x_2$. In this paper, we present a detailed analysis of its stability and bifurcations for $\nu>0$. 
For moderate viscosity, the equilibrium is unstable in a bounded interval $[\Delta T_1, \Delta T_2]$ of the parameter $\Delta T = T^+-T^-$, see Figure~\ref{fig:fig1}. On the other hand, the equilibrium is stable for large enough temperature difference, $\Delta T>\Delta T_2$ and also for small enough (including negative) temperature difference $\Delta T<\Delta T_1$. The parameter $\DT$ is relevant in our analysis since it arises in the comoving variable $x_2\to x_2 - T^- t$, while $T^-$ is removed.

For the hyperbolic case $\nu=0$, it turns out that $\Delta T_1=0$ and, for the spatially lowest harmonic eigenfunction,
\[
\Delta T_2= \frac{4L_1L_2^2}{\pi^2(L_2^2+4L_1^2)} = \frac{4 \ell^2 L_1}{(4 + \ell^2) \pi^2},
\] 
where $\ell=L_2/L_1$ is the aspect ratio. At $L_1=L_2$, that is, $\ell=1$, this is the instability region already found in \cite{daniel1}. It turns out that $\nu>0$ and $L_1\neq L_2$ allows for much richer bifurcation scenarios, and it moreover explains the location of the global stability threshold $\DT_*=4L_1/\pi^2$ as the limiting linear stability threshold for $\nu=0$ via
\[
\lim_{\ell\to\infty} \Delta T_2= \DT_*.
\] 
In fact, this is the upper bound of $\DT$ for any linear instability. 

One of the original motivations for this study from \cite{daniel1} with $L_1=L_2$ was to find \emph{subcritical} bifurcations at $\Delta T_1, \Delta T_2$, which would also explain a difference between the local instability threshold $\Delta T_2$ (given by spectral stability) and the global stability threshold $\Delta T_*$ (essentially depending on a Poincar\'e inequality constant). However, it turns out that the bifurcations are always supercritical.

\medskip
Coming back to the model origins, the sign of $\Delta T$ can be related to the region within the tokamak that is modelled by \eqref{eq1}: `good curvature' (negative $\Delta T$) and `bad curvature' regions, which is consistent with the different  stability properties for positive and negative $\Delta T$ as noted in \cite{daniel1}. The model captures the Electron Temperature Gradient instability. The modelling and physical relations to L-H transition (see \cite{Wagner}) remain to be understood. ``Clearly, the model selection criteria, apart from the sound physics behind them, should be based on their capability to reproduce key experimental facts such as spontaneous L-H transitions, characteristic intermediate regimes (such as dithering), or hysteresis"\cite{Malkov}.

\medskip
In this paper, we pursue a mathematical analysis that may serve as a basis to investigate further the relations to physical phenomena. The main results may be summarized as follows, see also Figures~\ref{fig:fig1} and \ref{fig:fig2}.

\paragraph{Global stability (Theorem \ref{theo:GS})} The steady state $\rho_{ss}$ is globally $\Lspace^2$-stable for $\Delta T<0$ and $\Delta T>\Delta T_*$. Global stability for $\nu=0$ in a similar region was already proven in \cite{daniel1} via an explicit Lyapunov functional given by energy conservation. In the case of dissipation, improved bounds give the present result with exponential decay. Moreover, the global stability threshold is sharp in the sense that it is realized as a limiting linear instability threshold in parameter space.

\paragraph{Local bifurcations (Theorems \ref{theorem2}, \ref{theorem3})} For a large class of parameter configurations, including $L_2/L_1< 2\sqrt{2}\approx 2.8$, the following holds. At the stability thresholds $\Delta T_j$, $j=1,2$, the critical modes are spatially the lowest harmonics, and the system undergoes supercritical Andronov-Hopf bifurcations corresponding to periodic travelling wave bifurcations with velocity $\omega = \pi(T^+ + T^-)/L_2$. Near the bifurcations, the reduced dynamics on a center manifold is the generic normal form. For $0<\nu\ll 1$ this always holds at $\DT_2$, but not necessarily at $\DT_1$.

The  local unfolding of the degenerate case $\Delta T_1= \Delta T_2$ proves that the two branches of periodic orbits are locally connected, and form an arc of stable periodic solutions. We numerically corroborate that, further away from this degeneracy, secondary instabilities occur along the arc. See Figure~\ref{fig:fig_traces} and \S\ref{s:center}.

In case $L_2 \gg L_1$, the primary instabilities can also be higher spatial harmonics, even simultaneously. We thus suspect rich dynamics already at onset, but a detailed analysis is beyond the scope of this paper. It is also possible, that as $\Delta T$ increases, a sequence of destabilization and restabilization occur through different harmonics. Roughly speaking, a heuristic interpretation for the model background would be that increasing $L_2$ for fixed $L_1$, $\Delta T$ introduces richer bifurcations from the steady state.

\begin{figure}[H]
	\label{fig:fig1}
\begin{center}
\begin{tabular}{cc}
	\includegraphics[scale = 0.45]{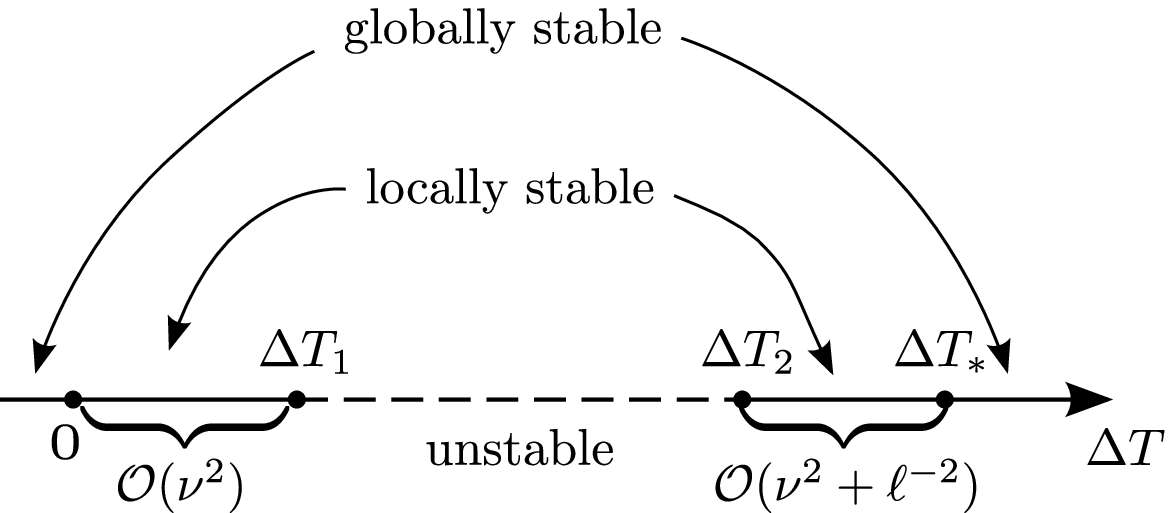}\hspace{5mm}
&	\includegraphics[scale = 0.45]{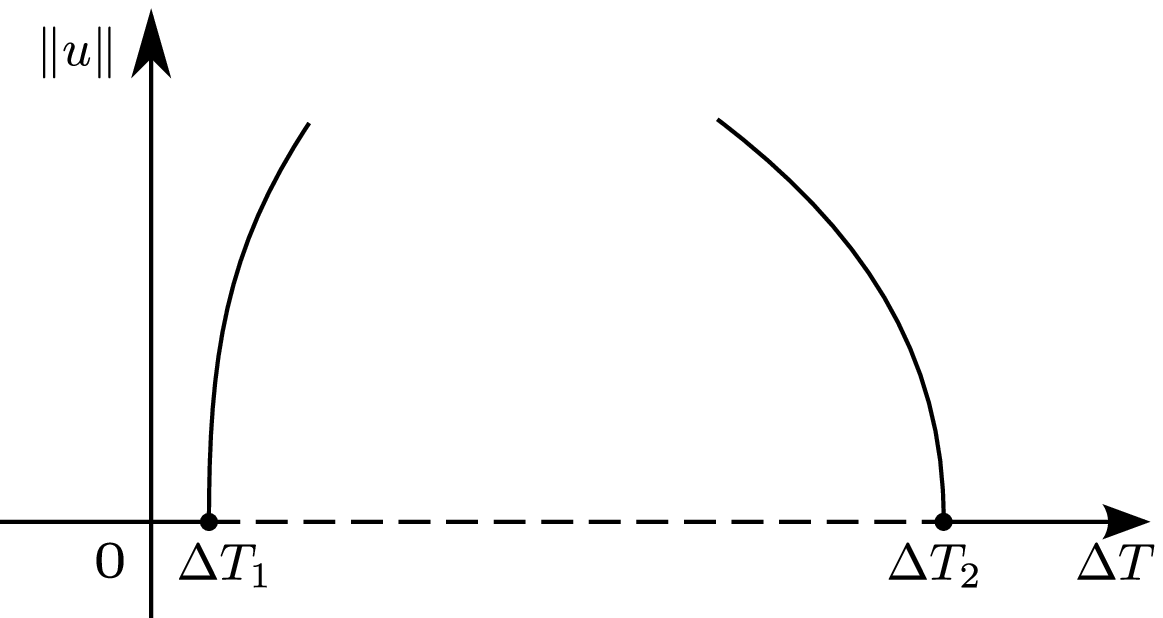}\\
(a) & (b)
\end{tabular}
	\caption{(a) Schematic illustration of the main case of a primary 1-instability region in the stability analysis of the steady-state $\rho_{ss}$ when including viscosity. The global stability threshold $\Delta T_*$ is larger than the linear stability threshold, even at $\nu=0$. However, in the limit $\nu\to 0$ the lower thresholds coincide, and if in addition $\ell\to\infty$, then also the upper linear thresholds tend to the global ones. (b) Sketch of local bifurcation diagram of the steady state $u = 0$ with supercritical branches of stable limit cycle. Solid line represents stable solutions and dashed lines unstable ones.}
	\end{center}
\end{figure}

\medskip
This paper is organized as follows. Section \ref{s:num} contains numerical computations, illustrating the results. In \S\ref{refor} we reformulate the problem for a subsequent bifurcation analysis. Section \ref{s:spec} concerns the spectrum of the linearized operator around the steady state $\rho_{ss}$. In \S\ref{s:center} we discuss the center manifold reduction, reduced vector fields and prove the main bifurcation results. In \S\ref{s:tw} we explain the relation to travelling wave bifurcations, and briefly consider pattern formation in case of an infinite strip. In \S\ref{s:nl} we discuss nonlinear instability for $\nu\geq0$ in the linearly unstable region. Finally, \S\ref{s:global} contains the global stability result.

\medskip
\textbf{Acknowledgement.} This work, supported by the European Communities under the contract of Association between EURATOM/FOM, was carried out within the framework of the European Fusion Programme with financial support from NWO. The views and opinions expressed herein do not necessarily reflect those of the European Commission. J.R.\  has been supported in part by NWO cluster NDNS+, D.H.-K.\ is grateful to the CWI, where this work was initiated, for its hospitality. We thank Hugo de Blank for his comments and suggestions on a draft version.

\section{Numerical results}\label{s:num}

For illustration of the upcoming analytical bifurcation results, we present in this section some numerical computations.
We compute the deviation $u=\rho-\rho_{ss}$ (see \eqref{eq2}) and discretize with a finite-dimensional spectral decomposition (see \eqref{e:gk} for the definition of the harmonics $g_k$):
\begin{equation}
u_{l}(x,t) = \sum_{k_1=1}^{N_{x_1}} \sum_{k_2=-N_{x_2}}^{N_{x_2}} C_{k_1,k_2,l}(t)g_{k_1,k_2}(x)\text{ , }l=1,2.
\end{equation}
We integrate the resulting system of ODEs using a semi-implicit Crank-Nicolson scheme, where only the linear part is implicit\footnote{We modified a code by Jean-Christophe Nave - MIT Department of Mathematics, jcnave$@$mit.edu for Navier-Stokes equations in vorticity formulation.}. We used the parameter values 
\begin{equation}\label{e:pars}
\nu = 9.10^{-4}, \; L_1 = L_2 = 2, \; T^{-} = 10^{-1},
\end{equation}
while $\Delta T=T^+-T^-$ varied across the instability region. Note that this lies in the region $L_2<2\sqrt{2}L_1$, thus the primary instabilities come from the lowest spatial harmonics as proven in Theorem~\ref{theorem1}. All the simulations were made with $N_{x_1} = N_{x_2} = 32$, though we selectively checked with $N_{x_1} = N_{x_2} = 64$.

\begin{figure}[H]
\begin{center}
\begin{tabular}{cc}
	\includegraphics[scale = 0.32]{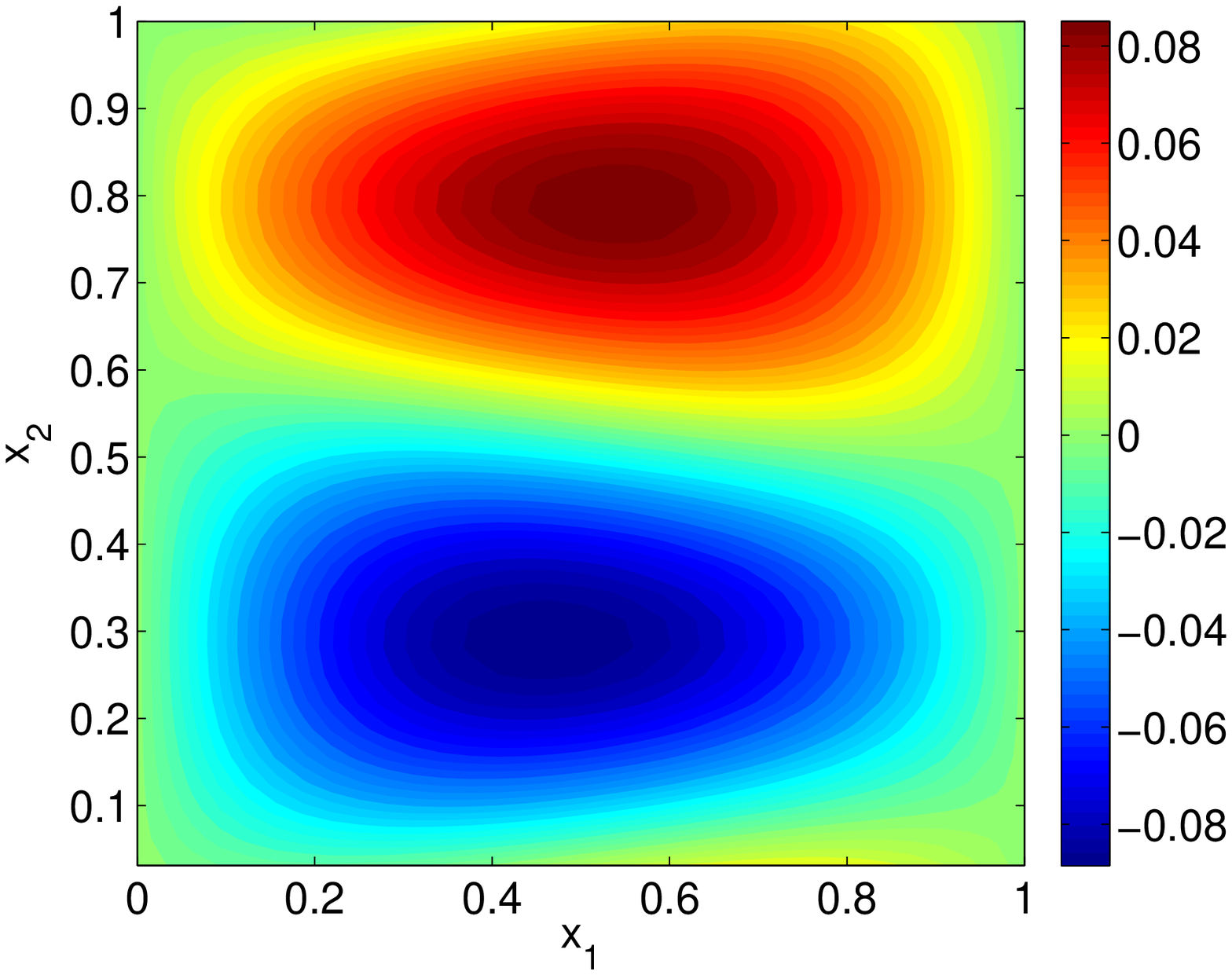}
&	\includegraphics[scale = 0.32]{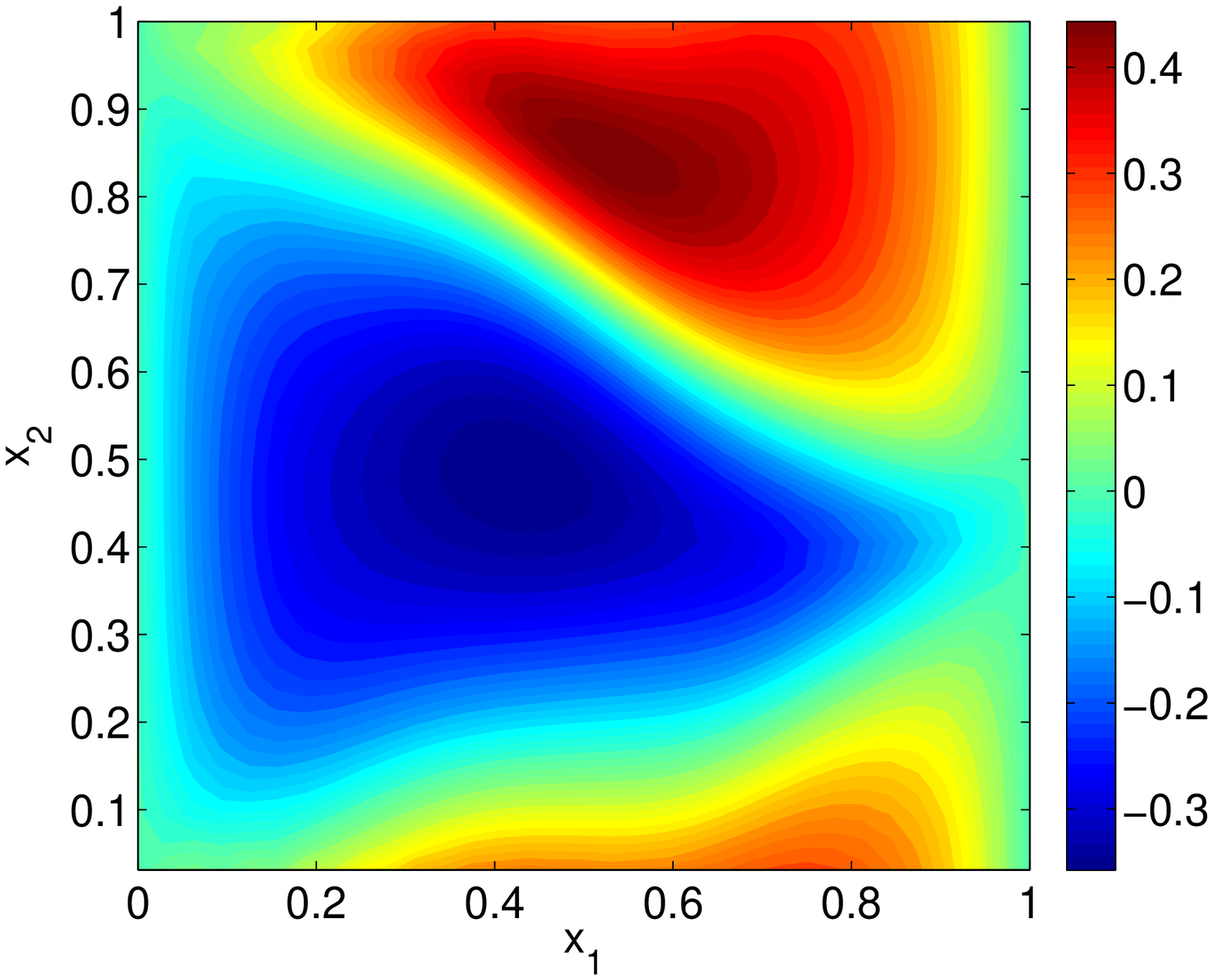}\\
(a) & (b)
\end{tabular}
\caption{Contour plots of $u_1(t_2) = \rho_1(t_2)-\rho_{ss}$ with $t_2$ sufficiently large for $\Delta T\in (\Delta T_1, \Delta T_2)$ and parameters as in \eqref{e:pars}. (a) the dynamics is a translation in the periodic $x_2$-direction, $\Delta T = 0.159291$, and (b) $\Delta T = 0.146122$, the dynamics resembles a modulated travelling wave.}
\label{fig:fig_plots}
\end{center}
\end{figure}

In Figure \ref{fig:fig_plots} we plot two periodic travelling wave solutions near the upper stability threshold $\Delta T_2 \approx 0.162$ and further inside the nonlinear regime as can be seen by the locus of parameters in Figure~\ref{fig:fig_nonlinear_attractor}(a). The weakly nonlinear solution for $\DT\approx \DT_2$ closely resembles the unstable eigenfunction, while the solution further inside the nonlinear regime has a clear nonlinear structure.

In order to trace the stable branches of solutions bifurcating from the supercritical Andronov-Hopf bifurcations at $\Delta T = \Delta T_1, \Delta T_2$, we perform a simple continuation: for $\DT$ near the bifurcation at $\DT_1$, we simulate an initial condition close to $\rho_{ss}$ and after a long transient compute the sup-norm over a long time interval. We then slightly increase $\Delta T$ and repeat this step with the initial condition being the solution at the final time of the previous step. In this way we obtain the bifurcation diagram in Figure~\ref{fig:fig_nonlinear_attractor}, where the numerical instability thresholds are in very good agreement with the analytical ones. 

\begin{figure}[H]
\centering
\begin{tabular}{cc}
	\includegraphics[scale = 0.32]{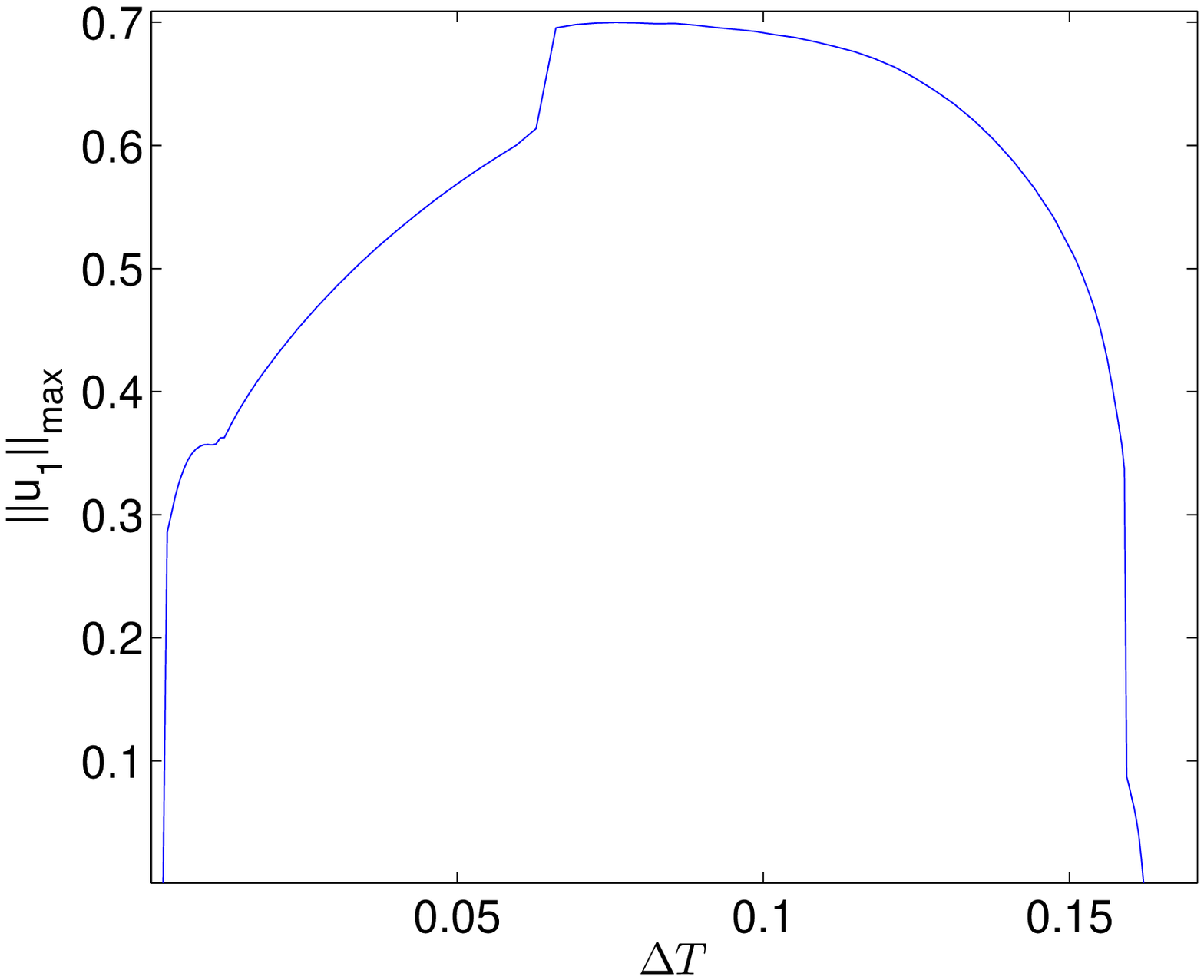}
&    \includegraphics[scale = 0.32]{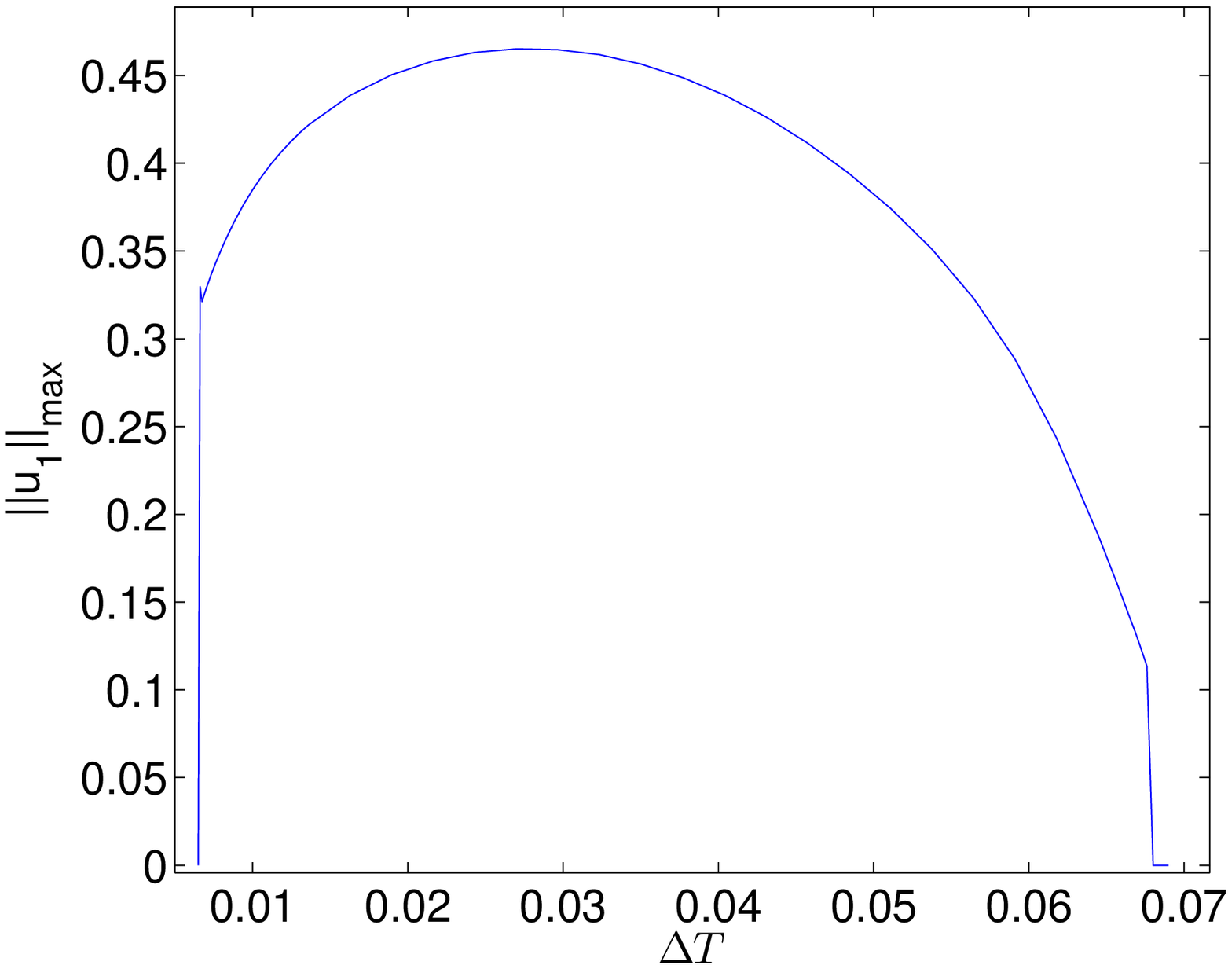}\\
(a) & (b)
\end{tabular}
	\caption {Bifurcation diagrams in $\Delta T$ with $\max_{t_1 \leq t \leq t_2} \Vert u_1(t) \Vert_{\infty}$ on the vertical axis, where $t_1,t_2$ are taken sufficiently large ($t_1 \geq 400$), so that we get a good approximation of the attractor. (a) parameters as in\ref{e:pars}, and (b) $L_1 = 0.9$, $L_2=0.9$, $\nu = 9.10^-3$, the diagram is in agreement with corollary \ref{c:hopf}.}
	\label{fig:fig_nonlinear_attractor}
\end{figure}

As predicted by Theorem~\ref{theorem2}, the slope of the resulting curve is larger near the left endpoint of the instability region than near the right endpoint. Further away from these endpoints, the solution shapes change and we conjecture a period doubling bifurcation near $\Delta T = 0.0646$, rapidly followed by a torus bifurcation. This would be consistent with the sharper increase in the sup-norms in Figure~\ref{fig:fig_nonlinear_attractor}, and the fact that the periods of the solutions become rather large, see Figure~\ref{fig:fig_traces}(b). Despite this detour to another attractor, the solution is eventually turning into the near harmonic periodic solution bifurcating from the right endpoint $\Delta T_2$, see Figure~\ref{fig:fig_plots} (a).

\begin{figure}[H]
\begin{center}
\begin{tabular}{cc}
	\includegraphics[scale = 0.32]{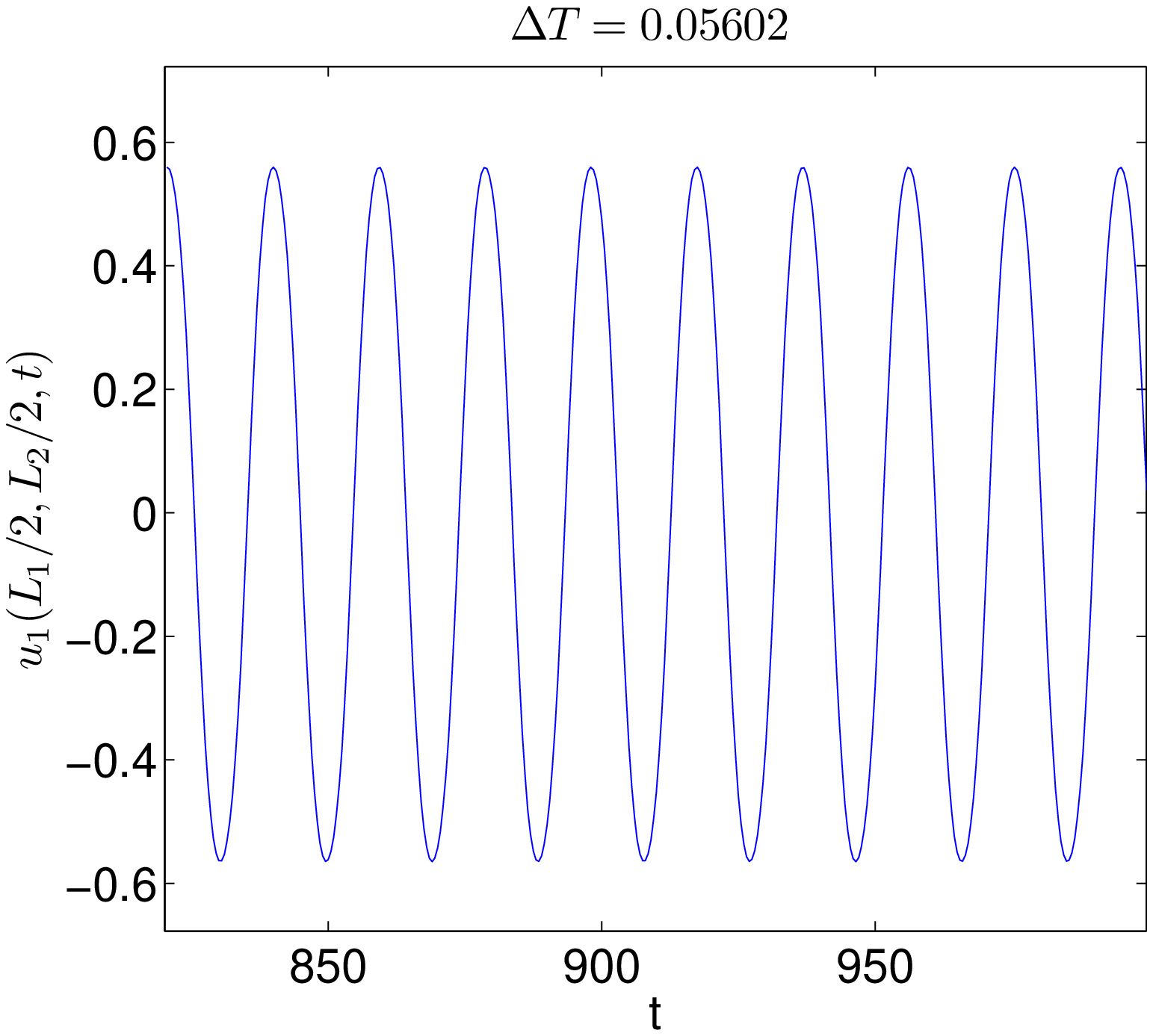}
&	\includegraphics[scale = 0.32]{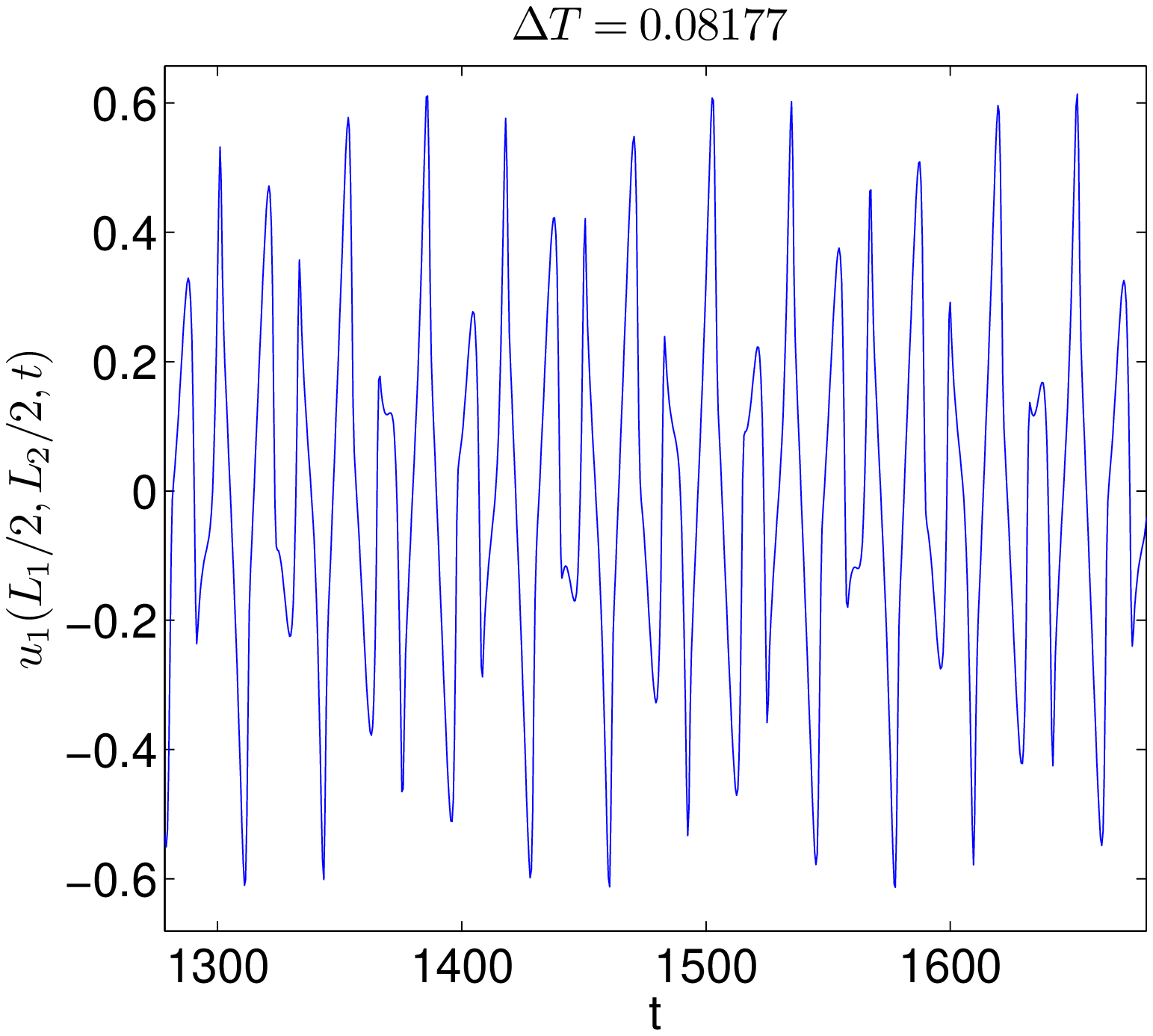}\\
(a) & (b)
\end{tabular}
	\caption {Time traces of $u_1$ at the midpoint $(x_1,x_2)=(L_1/2,L_2/2)$. (a) $\Delta T = 0.056$, and (b) $\Delta T = 0.08$. Compare Figure~\ref{fig:fig_nonlinear_attractor}.}
	\label{fig:fig_traces}
\end{center}
\end{figure}

As an example for instabilities caused by higher spatial harmonics, we plot in Figure~\ref{fig:fig_plots2} a solution that emerged from an instability with wavenumber $k_2=2$.

\begin{figure}[H]
\begin{center}
\includegraphics[scale = 0.35]{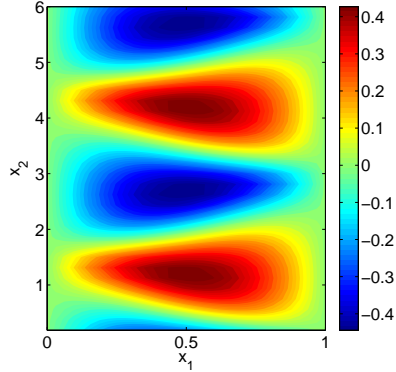}
\caption{Contour plot of $u_1(t_2)$ inside the nonlinear regime. In this case, the critical eigenfunction has a wavenumber $k_2 = 2$.}
\label{fig:fig_plots2}
\end{center}
\end{figure}

\section{Reformulation and setting}
\label{refor}
For the bifurcation study it is convenient to formulate \eqref{eq1} through the deviation $u=(u_1,u_2)$ from $\rho_{ss}$, 
\begin{eqnarray*}
\rho^{+} = \uP + \rho_{ss}^{+}, \rho^{-} = \uM + \rho_{ss}^{-}.
\end{eqnarray*}
In terms of $u$, and in the comoving variable $x_2\to x_2+T^- t$, system \eqref{eq1} reads
\begin{equation}\label{eq2}
\left\{
\begin{aligned}
\partial_t \uP &= \DT \partial_{x_2} \uP + E_2/L_1 - E^{\perp} \cdot \nabla \uP + \nu \nabla^2 \uP,\\
\partial_t \uM &= \hspace{16mm} - E_2/L_1 - E^{\perp} \cdot \nabla \uM + \nu \nabla^2 \uM,\\
E &=-\nabla V,\\
-\nabla^2 V &= \uP + \uM,\\
x &\in [0, L_{1}] \times \R/L_2 \Z\mbox{ ,}t \geq 0,
\end{aligned}
\right.
\end{equation}
subject to (periodic b.c.\ in $x_2$ and) homogeneous Dirichlet boundary conditions
\begin{equation}
\begin{aligned}
\uP(0,x_2,t) &= \uM(0,x_2,t) = V(0,x_2,t) = 0,\\
\uP(L_1,x_2,t) &= \uM(L_1,x_2,t) = V(L_1,x_2,t) = 0.
\end{aligned}
\end{equation}

\begin{remark}
We want to briefly point out a peculiarity of the nonlinearity in  \eqref{eq1} and equivalently \eqref{eq2}: viewed on complexified phase space, each eigenspace of the laplacian is flow invariant and the dynamics is purely linear.

Indeed, take an eigenfunction $e$ with eigenvalue $\lambda$ and set $u_j= \alpha_j e$ with $\alpha_j\in\C$ so that $E= (\alpha_1+\alpha_2)/\lambda \nabla e$. Hence, $E^\perp\cdot\nabla u_j =0$ so that \eqref{eq2} is in fact linear.

However, this does not provide flow invariant spaces for the real equations since all eigenvalues and eigenspaces are complex, and the previous argument is incorrect for linear combinations. Co-moving frames do not generate real eigenspaces due to the asymmetric advection terms. 
\end{remark}

Next we choose a simple functional analytic setting for a formulation of \eqref{eq2} as a parabolic problem by solving the Poisson equation. This is convenient for the center manifold reduction, but also gives a simple well-posedness setting. 

Let $\Omega := [0, L_1] \times [0, L_2]$ and denote the Sobolev spaces $\Hspace^j = H^j([0, L_{1}]\times \R/L_2 \Z)$ as well as 
\begin{equation}
\begin{aligned}
&X :=   H^1_0([0, L_{1}]\times \R/L_2 \Z),\\
&Y := \{ f \in \Hspace^2\,:\, f(0,x_2) = f(L_1,x_2) = 0\},  \\
&Z := \{ f \in \Hspace^3\,:\, f(0,x_2) = f(L_1,x_2) = 0\},\\
\end{aligned}
\end{equation}
which incorporate the Dirichlet boundary conditions.
We shall use standard notation: for $f_1$, $f_2$ $\in \Lspace^2([0,L_1]\times\mathbb{R}/L_2 Z)$ we denote the scalar product by $\langle f_1, f_2 \rangle = \int_{\Omega} f_1(x) \overline{f_2(x)} dx$ and for $\mathrm{f}_j = (\mathrm{f}_{j,1}, \mathrm{f}_{j,2}) \in L^2([0,L_1]\times\mathbb{R}/L_2 Z)^2$ $j = 1, 2$ by $\langle \mathrm{f}_1, \mathrm{f}_2 \rangle_{2} = \langle \mathrm{f}_{1,1} , \mathrm{f}_{2,1}\rangle +  \langle \mathrm{f}_{1,2} , \mathrm{f}_{2,2}\rangle$.

Thanks to these boundary conditions, we can solve the Poisson equation in \eqref{eq2}; see also \S\ref{s:spec} for explicit solutions. We thus obtain $E$ via the bounded operators $A_j: Z \to \Hspace^3$ defined by
\begin{equation}\label{defA}
\begin{aligned}
A_j f &:= \partial_{x_{j}}( \nabla^2)^{-1} f, \; j = 1,2,\\
A f &= (A_1 f, A_2 f)^{T},\\
A^{\perp} f &= (A_2 f, -A_1 f)^{T}. 
\end{aligned}
\end{equation}
Notably, $A_2$ in fact maps into $Z$, because $E_2 = \partial_{x_2} V$ vanishes for $x_1=0, L_1$ due to the Dirichlet boundary conditions. 

In order to apply standard results on parabolic equations, let us write \eqref{eq2} equivalently in the standard form
\begin{equation}\label{eq3}
\frac{ d u}{dt} = \L u + R(u),
\end{equation}
so that solutions of this and \eqref{eq1} are in 1-to-1 correspondence. Here
\begin{equation*}
\begin{aligned}
\L u &= \begin{pmatrix} 
	\DT \partial_{x_2} \uP  + \frac{1}{L_1} A_2(\uP + \uM) + \nu \nabla^2 \uP\\
	\hspace{14mm}- \frac{1}{L_1} A_2(\uP + \uM) + \nu \nabla^2 \uM 
\end{pmatrix}, \\
R(u) &= \begin{pmatrix}
 -A^{\perp}(\uP + \uM) \cdot \nabla \uP\\
 -A^{\perp}(u_1+u_2) \cdot \nabla u_2
\end{pmatrix}.
\end{aligned}
\end{equation*}

Note that $\L \in \mathcal{L}(Z \times Z,X \times X)$ is the linearization of \eqref{eq1} in $\rho_{ss}$. We have that $R: Z \times Z\to Y \times Y$ since $\nabla u_j$ vanish at $x_1=0,L_1$ and $\Hspace^2$ is a Banach algebra; $R$ is in fact analytic in $u$. See also \S \ref{s:center}. Moreover, the imbeddings $Z^2 \hookrightarrow Y^2 \hookrightarrow X^2$ are dense and the uniformly elliptic operator $-\L:Z\times Z\subset X \times X\to X \times X$ is a sectorial operator, generating an analytic semigroup, and so \eqref{eq2} admits mild and classical solutions $u(t)$ for any initial condition $u(0)\in Y \times Y$. The sectoriality is a consequence of the fact that the laplacian is sectorial in $Y$ with domain $\Lspace^2$ of the cylinder \cite{Henry}, and this is robust under addition of the lower order terms in $\L$. It thus also possesses a square root, which then provides an isomorphism from $\Lspace^2$ to $X$. Hence, $\L$ is also sectorial on $Z$ with domain $X$. Note also that  $\L$ has a compact resolvent and thus discrete spectrum accumulating at $-\infty$. We discuss its spectrum in detail in the next section. 

\section{Spectrum of the linearization}\label{s:spec} 

For the bifurcation analysis, we distinguish the stable spectrum of $\L$, $\sigma_{-}(\L):=\{ \lambda \in \sigma(\L) : \Re \lambda < 0 \}$, its neutral spectrum $\sigma_{0}(\L)=\{ \lambda \in \sigma(\L) : \Re \lambda = 0 \}$ and its unstable spectrum $\sigma_{+}(\L)=\{ \lambda \in \sigma(\L) : \Re \lambda > 0 \}$. 

The next Lemma characterizes the spectrum and is the basis for the identification of bifurcations. While this concerns the comoving variable of system \eqref{eq2}, the spectrum for the original system is the same up to a scaling of the imaginary parts. See \S\ref{s:tw}.

\begin{lemma} \label{l:spec}
The spectrum $\sigma(\L)$ of $\L$ consists of the eigenvalues 
\begin{equation}\label{e:lam}
\lambda_k^\pm = \rmi \pi\frac{k_2 \DT}{\ell L_1}  - \pi^2 \frac\nu {L_1^2}\left( k_1^2 + \frac{4 k_2^2}{\ell^2} \right) \pm \sqrt{\ds}, \; k \in \N \times \Z,
\end{equation}
where $\N=\NN\setminus\{0\}$ and 
\begin{equation}\label{discr}
\begin{aligned}
\ds = \frac{k_2^2\DT}{\ell^2 L_1} \left( \frac{4}{ k_1^2 + 4 (k_2/\ell)^2} - \pi^2 \frac \DT{L_1}\right).
\end{aligned}
\end{equation}
In particular, $\lambda_k^-\in \sigma_-(\L)$, and if $\ds\leq 0$ then $\lambda_k^+\in \sigma_-(\L)$. 
Moreover, $\Re\left(\lambda_{(k_1,k_2)}^+\right)< \Re\left(\lambda_{(1,k_2)}^+\right)$.
\end{lemma}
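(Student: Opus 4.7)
The plan is to simultaneously diagonalize $\L$ in a Fourier basis adapted to the boundary conditions. With Dirichlet conditions at $x_1=0,L_1$ and periodicity in $x_2$, the system
$$g_{k_1,k_2}(x)=\sin(\pi k_1 x_1/L_1)\exp(2\pi\rmi k_2 x_2/L_2),\qquad (k_1,k_2)\in\N\times\Z,$$
is complete and orthogonal in the complexification of $\Lspace^2(\Omega)$ and diagonalizes $\nabla^2$. Because the Poisson solve $V=-(\nabla^2)^{-1}(u_1+u_2)$ respects the Dirichlet condition (exactly the observation about $A_2$ in Section \ref{refor}), $A_2$ is diagonal on this basis as well. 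Grouping the two components, $\L$ splits into a direct sum of $2\times 2$ blocks $M_k$ acting on the coefficient pair of $(u_1,u_2)$ at mode $k$. Since $\L$ is sectorial with compact resolvent, its spectrum is purely discrete and equals the union over $k$ of the eigenvalues of the blocks $M_k$.

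The next step is a direct computation. Set $\alpha_k=\pi k_1/L_1$, $\beta_k=2\pi k_2/(\ell L_1)$, $\mu_k=\beta_k/(L_1(\alpha_k^2+\beta_k^2))$ and $D_\nu=\nu(\alpha_k^2+\beta_k^2)$. The relations $\partial_{x_2} g_k=\rmi\beta_k g_k$, $\nabla^2 g_k=-(\alpha_k^2+\beta_k^2) g_k$ and $A_2 g_k=-\rmi\beta_k/(\alpha_k^2+\beta_k^2)\,g_k$ yield
$$M_k=\begin{pmatrix} \rmi\beta_k\DT-\rmi\mu_k-D_\nu & -\rmi\mu_k \\ \rmi\mu_k & \rmi\mu_k-D_\nu \end{pmatrix}.$$
The trace equals $\rmi\beta_k\DT-2D_\nu$, and a short simplification of $(\mathrm{tr}\,M_k/2)^2-\det M_k$ collapses to $\beta_k^2\DT\bigl(1/(L_1(\alpha_k^2+\beta_k^2))-\DT/4\bigr)$. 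Substituting $\beta_k^2=4\pi^2 k_2^2/(\ell^2 L_1^2)$ and $\alpha_k^2+\beta_k^2=\pi^2(k_1^2+4k_2^2/\ell^2)/L_1^2$ recovers exactly $\ds$ of \eqref{discr}, and the eigenvalues of $M_k$ then match $\lambda_k^\pm$ in \eqref{e:lam}.

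For the sign assertions and the monotonicity I fix the square-root branch with $\Re\sqrt{\ds}\geq 0$ and write $\Re\lambda_k^\pm=-D_\nu\pm\Re\sqrt{\ds}$ with $D_\nu>0$. The inequality $\Re\lambda_k^-<0$ is immediate, and when $\ds\leq 0$ the square root is purely imaginary, so $\Re\lambda_k^+=-D_\nu<0$ as well. For $\Re\lambda_{(k_1,k_2)}^+<\Re\lambda_{(1,k_2)}^+$ at $k_1>1$, the first term $-D_\nu$ is strictly decreasing in $k_1$, and I would check that $\Re\sqrt{\ds}$ is non-increasing by case analysis on the sign of $\DT$: if $\DT\leq 0$ (or $k_2=0$), the two signed factors in \eqref{discr} either vanish or have opposite signs, so $\ds\leq 0$ and $\Re\sqrt{\ds}=0$ for every $k_1$; if $\DT>0$ and $k_2\neq 0$, the factor $4/(k_1^2+4k_2^2/\ell^2)$ strictly decreases in $k_1$, so $\ds$ is strictly decreasing where positive and is $\leq 0$ beyond, making $\Re\sqrt{\ds}$ non-increasing. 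Combining a strictly decreasing first term with a non-increasing second term gives the strict inequality. The proof has no real obstacle; the only mildly delicate ingredient is the $A_2$-invariance of the Fourier modes, which the boundary-conditions setup of Section \ref{refor} already secures.
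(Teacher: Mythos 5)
Your proof is correct and follows essentially the same approach as the paper's: expand in the $g_k$ basis, observe that $\L$ reduces to $2\times 2$ blocks $M_k$ (your $\alpha_k,\beta_k,\mu_k,D_\nu$ are just a renaming of the paper's $C_1,C_2,C_3$), compute trace and discriminant, and read off the eigenvalues. Your case analysis for the monotonicity of $\Re\sqrt{\ds}$ in $k_1$ is in fact a bit more careful than the paper's terse remark that ``$\ds$ monotonically decreases in $k_1$'' (which is literally false when $\DT<0$, though the conclusion about $\Re\lambda_k^+$ still holds since $\ds\le 0$ there), so that is a welcome clarification.
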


We will start to discuss the relevance and implications of this result after the proof.
In preparation of the proof, choose the orthogonal basis of $X$ given by
\begin{equation}\label{e:gk}
g_k(x) := \sin\bigg(\frac{k_1 \pi x_1}{L_1}\bigg) e^{\frac{2 \rmi \pi k_2 x_2}{L_2}},
\end{equation}
where $k \in \N \times \Z$.
In order to express the operator $A$, denote
\begin{equation}
\phi_k(x) := \cos\bigg(\frac{k_1 \pi x_1}{L_1}\bigg) e^{\frac{2 \rmi \pi k_2 x_2}{L_2}}.
\end{equation}
Indeed, if $f \in X$, the explicit solution to the Poisson equation $-\nabla^2 V = f$ in terms of this basis reads
\begin{eqnarray*}
V(x) =  \frac{2}{\pi^2} \sum_{k \in \N \times \Z} \frac{1}{ \frac{L_2}{L_1} k_1^2 + 4 \frac{L_1}{L_2} k_2^2}
\left(\int_{\Omega}f(y)\overline{g_k(y)}dy\right) \, g_k (x).
\end{eqnarray*}
We therefore get the explicit formula for $A$:
\begin{equation}\label{e:Agk}
A f(x) = -\frac{2}{\pi} \sum_{k \in \N \times \Z} \langle f, g_k \rangle \bigg( \frac{L_2}{L_1}k_1^2 + \frac{4 L_1}{L_2} k_2^2 \bigg)^{-1}
\begin{pmatrix}
k_1 \phi_k(x)/L_1 \\
2 \rmi k_2 g_k(x) / L_2
\end{pmatrix}.
\end{equation}

\begin{proof}[Lemma~\ref{l:spec}]
Consider functions of the form $q g_k(x)$, $k \in \N\times \Z$, where $q \in \C^2$ is an arbitrary constant vector. Since
\begin{align}\label{e:A2gk}
A_2 g_k(x) = -\frac{2 L_1 \rmi}{\pi}\frac{k_2}{\frac{L_2}{L_1}k_1^2 + 4 \frac{L_1}{L_2}k_2^2}g_k(x),
\end{align}
the action of $\L$ on such functions is
\begin{eqnarray}\label{eq13}
(L q g_k )(x) = M_k q g_k(x),
\end{eqnarray}
where
\begin{equation}
M_k :=
\begin{pmatrix}
C_1(k) \DT - C_2(k) - C_3(k) & -C_2(k) \\
C_2(k) & C_2(k) - C_3(k)
\end{pmatrix},\label{eq16}
\end{equation}
with
\begin{equation*}
C_1(k) := \frac{2 \pi k_2 \rmi}{L_2}\mbox{, }C_2(k) := \frac{2 \rmi}{\pi}\frac{k_2}{\frac{L_2}{L_1}k_1^2 + \frac{4 L_1}{L_2} k_2^2}\mbox{, } C_3(k) := \nu \pi^2 \bigg( \frac{k_1^2}{L_1^2} + \frac{4 k_2^2}{L_2^2}\bigg).
\end{equation*}.

The eigenvalues of $M_k$ are readily computed to be $\lambda_k^\pm$. The claims on the real parts of $\lambda_k^\pm$ immediately follow from inspecting \eqref{e:lam} -- in particular $\ds$ monotonically decreases in $k_1$. 
\end{proof}

Note that the proof also implies that eigenfunctions of $\L$ have the form 
\begin{equation}\label{e:zeta}
\zeta_{k}(x) := \xi_{k} g_k(x)\in Z \times Z,
\end{equation}
with $\xi_k$ a eigenvector of $M_k$.

\medskip
The last statement in Lemma~\ref{l:spec} means that only $\ds>0$ and $\lambda_{(1,k_2)}^+$ with $k_2\in\Z\setminus\{0\}$ allow for destabilization, and the real part in this case is given by
\begin{equation}\label{e:relam}
\Re\left(\lambda_{(1,k_2)}^+\right) = - \pi^2 \frac\nu {L_1^2}\left( 1 + \frac{4 k_2^2}{\ell^2} \right) + \sqrt{\frac{k_2^2\DT}{\ell^2 L_1} \left( \frac{4}{1 + 4 (k_2/\ell)^2} - \pi^2 \frac \DT{L_1}\right)}.
\end{equation}
Note that this is a function of the three parameters $\nu/L_1^2$, $\DT/L_1$, $(k_2/\ell)^2$. As expected, increasing viscosity always stabilizes, with increasing impact for increasing $(k_2/\ell)^2$. However, the dependence of the real part on $k_2/\ell$ is not necessarily monotone, which allows for intricate destabilization scenarios.

The imaginary part, $\Im(\lambda_{(1,k_2)}^+)$, is never zero, which means that all bifurcations are non-stationary and we generically expect Andronov-Hopf bifurcations, where $k_2$ determines the wavenumber of bifurcating solutions. 

We consider the temperature difference $\DT$ as the primary bifurcation parameter and therefore focus on the location of instabilities as $\DT$ varies, as well as on the wavenumber of destabilizing modes determined by $k_2$.

\begin{figure}[H]
\begin{center}
\begin{tabular}{cc}
\includegraphics[scale = 1.2]{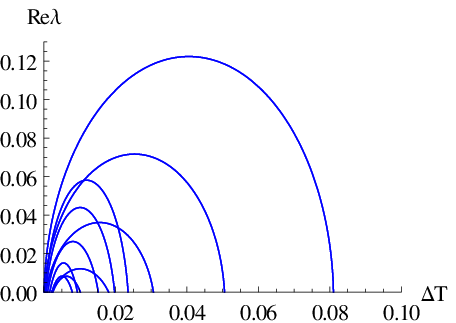}\hspace{5mm}
& \includegraphics[scale = 1.2]{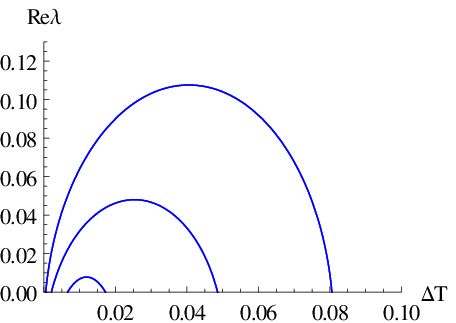}\\
(a) & (b)
\end{tabular}
\caption{\label{fig:fig2} Real parts of eigenvalues as functions of $\DT$ for $L_1 = L_2 = 1$. The unstable eigenvalues with $k \in \lbrace 1, ..., 10\rbrace \times \lbrace-10, ..., 10 \rbrace$ are plotted for (a) $\nu = 10^{-3}$, (b) $\nu = 4\cdot 10^{-3}$.}
\end{center}
\end{figure}

In Figure~\ref{fig:fig2} we plot sample computations of spectrum as $\Delta T$ varies, illustrating the stabilizing effect of the viscosity. Crossings of eigenvalue curves at zero real part can occur, which is expected to generate rich bifurcations. However, in this paper we focus on simple Andronov-Hopf bifurcations.

\medskip
Recall the spectral conditions at a primary Andronov-Hopf bifurcation 
\begin{equation}\label{eq5}
\begin{aligned}
&\mbox{(i) }\mbox{There is a constant }\gamma>0\mbox{ s.t.}\sup\{\Re \lambda : \lambda \in \sigma_{-}(\L) \}  <-\gamma,\\
&\mbox{(ii) } \sigma_{0}(\L) = \{ \pm \rmi \omega \} , \omega>0\mbox{ and } \pm \rmi \omega \mbox{ are simple eigenvalues},\\
&\mbox{(iii) } \sigma_{+}(\L) = \emptyset,
\end{aligned}
\end{equation}
and in the nondegenerate case, the critical eigenvalues transversely cross the imaginary axis upon parameter variation.

\medskip
It turns out that we can characterize a large part of parameter space, where critical eigenvalues have $k_2=1$, that is, $k=k_c:=(1,1)$. We therefore define the following particular case of \eqref{eq5}.
 \begin{hypothesis} 
\label{hypoth1}
It holds that $\Re \lambda^{+}_{k_c} = \Re \lambda^{+}_{\overline{k}_c} = 0$ and there is $\gamma>0$ such that $\Re \lambda^{\pm}_k< -\gamma$ for $k \in \N \times \Z \setminus \{k_c,\overline{k}_c\}$.
\end{hypothesis}

Here and in the following we denote $\overline{\kappa} = (\kappa_1,-\kappa_2)$ for $\kappa \in \R^2$.

Rearranging sign conditions on \eqref{e:relam} and squaring, we readily compute that the sign of $\Re(\lambda_{(1,k_2)}^+)$, for $\kappa_2=k_2^2$ is the sign of
\begin{equation}
\label{fun1red}
\d(\DT,\kappa_2) = \frac{4 L_1^3}{4\kappa_2+\ell^2} \DT - \frac{L_1^2\pi^2}{\ell^2}\DT^2 - \nu^2\pi^4\frac{(4\kappa_2+\ell^2)^2}{\kappa_2 \ell^4},
\end{equation}
which is somewhat simpler to handle. In particular, zeros of $\d$ are the critical eigenvalues for bifurcations. This yields the following a priori bounds on $\DT$ for linear instability.

\begin{lemma}\label{l:bound}
For all $\nu, \ell$ and $\kappa_2>0$, the real roots of $\d(\cdot,\kappa_2)$ lie in $[0, 4L_1/\pi^2]$. Moreover, the real roots approach the endpoints in the limit $\ell\to \infty$ if $\nu=o(\ell^{-1})$.
\end{lemma}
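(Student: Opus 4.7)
The plan is to analyze $\d(\cdot,\kappa_2)$ as a quadratic polynomial in $\DT$ with real coefficients, and extract the location of its real roots via Vieta's formulas rather than by writing them out explicitly.

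First I would observe that the coefficient of $\DT^2$ in $\d$ is $-L_1^2\pi^2/\ell^2 < 0$, so $\d$ is a downward parabola in $\DT$, and its constant term $-\nu^2\pi^4(4\kappa_2+\ell^2)^2/(\kappa_2\ell^4)$ is non-positive. Writing the equation $\d(\DT,\kappa_2)=0$ in monic form, Vieta yields for the two (possibly complex) roots $\DT_\pm$:
\begin{equation*}
\DT_+ + \DT_- \;=\; \frac{4L_1\ell^2}{\pi^2(4\kappa_2+\ell^2)}, \qquad
\DT_+\DT_- \;=\; \frac{\nu^2\pi^2(4\kappa_2+\ell^2)^2}{\kappa_2 L_1^2\ell^2}.
\end{equation*}
Both quantities are strictly positive, so whenever the roots are real they are both positive. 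Moreover, since $\ell^2/(4\kappa_2+\ell^2)<1$ for $\kappa_2>0$, the sum is bounded by $4L_1/\pi^2$. Combined with positivity and the fact that each root is at most the sum, we conclude $\DT_\pm\in[0,4L_1/\pi^2]$ whenever they are real.

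For the asymptotic part, I would simply take $\ell\to\infty$ under the assumption $\nu=o(\ell^{-1})$. The sum of roots $\frac{4L_1\ell^2}{\pi^2(4\kappa_2+\ell^2)}$ tends to $4L_1/\pi^2$. For the product, $(4\kappa_2+\ell^2)^2/\ell^2 \sim \ell^2$ as $\ell\to\infty$, so $\DT_+\DT_- = O(\nu^2\ell^2) = o(1)$. A positive pair of numbers whose sum tends to $S$ and whose product tends to $0$ must approach $\{0,S\}$, which gives the desired limits $\DT_-\to 0$ and $\DT_+\to 4L_1/\pi^2$.

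There is no genuine obstacle; the only mild subtlety is to argue cleanly that the roots, when real, lie in the claimed interval without solving the quadratic explicitly. Using Vieta keeps the argument short and avoids dealing with the discriminant, which is cumbersome. One could also note in passing that the roots are real precisely when the discriminant of the quadratic is non-negative, i.e.\ when $\nu$ is small enough relative to $\ell$ and $\kappa_2$, consistent with the second statement, but this is not needed for the lemma itself.
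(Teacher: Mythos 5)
Your proof is correct, and it takes a genuinely different route from the paper's. The paper argues directly from the sign pattern of $\d$ and $\partial_{\Delta T}\d$ at the two endpoints $\Delta T=0$ and $\Delta T=4L_1/\pi^2$ together with downward concavity: $\d(0,\kappa_2)\leq 0$ with $\partial_{\Delta T}\d(0,\kappa_2)>0$ forces $0$ to lie at or left of the smaller root, while $\d(4L_1/\pi^2,\kappa_2)<0$ with $\partial_{\Delta T}\d(4L_1/\pi^2,\kappa_2)<0$ forces $4L_1/\pi^2$ to lie at or right of the larger one; the limit statement is read off by multiplying \eqref{fun1red} by $\ell^2$. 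You instead pass to Vieta, noting that the sum and product of the roots are positive and that the sum is below $4L_1/\pi^2$ because $\ell^2/(4\kappa_2+\ell^2)<1$, which confines both real roots to the interval; for the asymptotics you observe that the sum tends to $4L_1/\pi^2$ while the product is $O(\nu^2\ell^2)=o(1)$, so a positive pair with sum $\to S$ and product $\to 0$ must split as $\{0,S\}$. Both arguments are elementary quadratic manipulations of comparable length; the paper's endpoint-sign check is slightly more geometric, while your Vieta version has the minor advantage that the limit part falls out cleanly from the same two symmetric functions used for the bound. One small point both treatments leave implicit (and which you rightly flag as a side remark): for the limit statement one should note that $\nu=o(\ell^{-1})$ eventually forces $\nu<\nu_\crit(\kappa_2)$, so the roots are actually real for $\ell$ large.
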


\begin{proof}
Since $\d(0,\kappa_2)\leq 0$ and $\partial_{\DT} \d (0,\kappa_2) > 0$ the lower bound holds. For the upper bound, observe that $\d(4L_1/\pi^2,\kappa_2)<0$ and $\partial_{\DT}d(4 L_1/\pi^2,\kappa_2)<0$, which proves the claim since the quadratic coefficient of $\DT$ is negative. The statement on the limits readily follows from \eqref{fun1red} upon multiplication by $\ell^2$.
\end{proof}

Note that $\d(\cdot,\kappa_2)$, as a quadratic polynomial in $\Delta T$, has two real roots $\Delta T_1(\kappa_2) \leq \Delta T_2(\kappa_2)$ if and only if the viscosity is sufficiently small,
\begin{equation}
\label{inst_condred}
\nu \leq \nu_\crit(\kappa_2):=\frac{2 \sqrt{\kappa_2} \ell^3 L_1^2}{(4 \kappa_2 + \ell^2)^2 \pi^3},
\end{equation}
with a double root at equality.  Hence, this is a necessary and sufficient condition for the occurrence of critical eigenvalues $\lambda_{(1,\sqrt{\kappa_2})}^+$ as $\DT$ varies. However, it is subtle to determine when the critical eigenvalues destabilize the equilibrium as this requires to exclude unstable eigenvalues for all other $k_2$.

Nevertheless, the location of these parabola's maxima in $\DT$ is at
\begin{equation}\label{e:maxloc}
\DT = \frac{2 \ell^2 L_1}{(4 \kappa_2 + \ell^2) \pi^2},
\end{equation}
which is strictly decreasing in $\kappa_2$. Therefore, the $k_2$-value of these parabola in $\DT$ can be identified by the relative location of their maxima.\\

\begin{remark}\label{r:thres}
For $\kappa_2=1$ the roots satisfy $\Delta T_1=\calO( \nu^2 )$ and $\Delta T_2=\frac{4 \ell^2 L_1}{\pi^2(\ell^2+4)}+\calO(\nu^2)$, which was already illustrated in Figure~\ref{fig:fig1}.
\end{remark} 

The geometric nature of bifurcating solutions is determined by the $k_2$-value of critical and destabilizing eigenvalues as $\DT$ in- or decreases from outside $[0,4L_1/\pi^2]$. We thus define

\begin{definition}
For given $L_1, \ell$, $\nu$, we say that $\L$ possesses a $k_2$-instability region, if $\d(\cdot,k_2)$ has two positive roots $\Delta T_1(k_2^2) \leq \Delta T_2(k_2^2)$. We call a $k_2$-instability region locally primary, if there is a neighbourhood $S \subseteq \mathbb{R}$  of $J:= (\Delta T_1(k_2^2), \Delta T_2(k_2^2))$, s.t. the steady state $u=0$ is stable for $\Delta T \in S\setminus J$  and $\DT_j(k_2^2)\neq \DT_j(\kappa_2)$ for $\kappa_2\neq k_2^2$, $j=1,2$. Moreover, we say that the $k_2$-instability region is primary, if it is locally primary and $S = \mathbb{R}$.
\end{definition}

To ease notation, we simply write $\Delta T_j$ for $\DT_j(1)$, $j=1,2$.

As a first step to understand the nature of destabilizing $k_2$-instability regions, we consider the case $k_2=1$ and in preparation define the following condition.

\begin{hypothesis}\label{h:prima}
Suppose that for given $L_1, \ell, \nu>0$ we have
\begin{equation}
\label{k2_1_primary_cond}
\frac{\DT}{\nu^2 \pi^4} > \frac{(4 + \ell^2) (4 k_2^2 + \ell^2)}{16\ell^4 L_1^3 k_2^2} (\ell^4-16 k_2^2)
\end{equation}
and $\nu < \nu_{crit}(1)$ for $\Delta T\in \{\Delta T_1,\Delta T_2\}$, and all $k_2\in\N$, $k_2\geq 2$.
\end{hypothesis}

Note that Hypothesis~\ref{h:prima} requires a ratio of temperature difference and viscosity to dominate a ratio involving domain geometry and linear mode harmonics.

\begin{theorem}
\label{theorem1}
\begin{enumerate}
\item A $1$-instability region of $\L$ is locally primary if and only if Hypothesis~\ref{h:prima} holds. If it holds, then Hypothesis~\ref{hypoth1} is satisfied at $\DT=\Delta T_j$, $j=1,2$.
The critical eigenvalues are $\lambda_j = \pm\rmi\omega_j$ with $\omega_j = \pi\DT_j/(\ell L_1)$.
\item For $0<\ell \leq 2\sqrt{2}\approx 2.8$ any $1$-instability region of $\L$ is primary and Hypothesis~\ref{hypoth1} is satisfied at  $\DT=\Delta T_j$, $j=1,2$.
\end{enumerate}
\end{theorem}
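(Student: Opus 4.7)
\medskip
\textbf{Proof plan.}
The plan is to translate the spectral conditions of Hypothesis~\ref{hypoth1} at $\DT=\DT_j$ into algebraic inequalities on the polynomial $\d(\cdot,k_2^2)$ from \eqref{fun1red}, and then to exploit a factorization of $\d(\DT_j,k_2^2)$ obtained by eliminating $\DT_j^2$ via the identity $\d(\DT_j,1)=0$. By Lemma~\ref{l:spec}, only $\lambda_{(1,k_2)}^+$ with $k_2\in\Z\setminus\{0\}$ can cross the imaginary axis, and the sign of its real part coincides with that of $\d(\DT,k_2^2)$; hence Hypothesis~\ref{hypoth1} at $\DT_j$ reduces to the requirement $\d(\DT_j,k_2^2)<0$ for every integer $k_2\geq 2$, together with simplicity of $\lambda_{k_c}^+,\lambda_{\overline{k}_c}^+$ and identification of their imaginary parts.

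The central algebraic step uses
\[
\frac{L_1^2\pi^2}{\ell^2}\DT_j^2=\frac{4L_1^3}{4+\ell^2}\DT_j-\nu^2\pi^4\frac{(4+\ell^2)^2}{\ell^4},
\]
derived from $\d(\DT_j,1)=0$, to eliminate $\DT_j^2$ in $\d(\DT_j,k_2^2)$. After combining fractions and invoking the identity $k_2^2(4+\ell^2)^2-(4k_2^2+\ell^2)^2=(1-k_2^2)(16k_2^2-\ell^4)$, one obtains
\[
\d(\DT_j,k_2^2)=(1-k_2^2)\left[\frac{16L_1^3\,\DT_j}{(4+\ell^2)(4k_2^2+\ell^2)}+\frac{\nu^2\pi^4(16k_2^2-\ell^4)}{k_2^2\ell^4}\right].
\]
Since $1-k_2^2<0$ for $k_2\geq 2$, the inequality $\d(\DT_j,k_2^2)<0$ is equivalent to strict positivity of the bracket, which on rearrangement is precisely the inequality defining Hypothesis~\ref{h:prima} (trivially true when $\ell^4\leq 16k_2^2$). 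For Part~1, being locally primary forces $\d(\DT_j,k_2^2)\leq 0$ for all $k_2\geq 2$, and the non-coincidence requirement upgrades this to strict inequality (equality would give $\DT_j\in\{\DT_1(k_2^2),\DT_2(k_2^2)\}$). This yields the iff statement. The remaining parts of Hypothesis~\ref{hypoth1} follow from \eqref{e:lam}: the condition $\Re\lambda_{k_c}^+=0$ together with $\nu>0$ forces $\sqrt{\ds}=C_3(k_c)>0$ at $\DT_j$, so $\lambda_{k_c}^+=\rmi\omega_j$ with $\omega_j=\pi\DT_j/(\ell L_1)$ and $\lambda_{k_c}^+\neq\lambda_{k_c}^-$, giving simplicity; the uniform spectral gap on the stable side comes from discreteness of $\sigma(\L)$, which accumulates only at $-\infty$.

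For Part~2, when $\ell\leq 2\sqrt{2}$ and $k_2\geq 2$ one has $\ell^4\leq 64\leq 16k_2^2$, and I would compare the two quadratics directly via
\[
\d(\DT,k_2^2)-\d(\DT,1)=\frac{16L_1^3(1-k_2^2)}{(4+\ell^2)(4k_2^2+\ell^2)}\DT-\frac{\nu^2\pi^4(k_2^2-1)(16k_2^2-\ell^4)}{k_2^2\ell^4},
\]
in which both the linear coefficient and the constant term are non-positive, with the linear coefficient strictly negative. Hence $\d(\DT,k_2^2)\leq\d(\DT,1)$ for all $\DT\geq 0$, strictly for $\DT>0$, and by Lemma~\ref{l:bound} also $\d(\DT,k_2^2)\leq 0$ for $\DT<0$. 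Consequently, on $\R\setminus(\DT_1,\DT_2)$, where $\d(\DT,1)\leq 0$, we get $\d(\DT,k_2^2)\leq 0$, so no mode is unstable there and the 1-instability region is primary with $S=\R$. At $\DT_j>0$ the inequality is strict and Hypothesis~\ref{hypoth1} follows exactly as in Part~1.

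The main obstacle is the algebraic reorganization producing the factorization $(1-k_2^2)[\cdots]$; once this identity is in hand, the remainder reduces to sign inspections and an appeal to Lemma~\ref{l:bound}. A minor but essential subtlety is that $\DT_j>0$ strictly, which follows from $\d(0,1)=-\nu^2\pi^4(4+\ell^2)^2/\ell^4<0$ and thus genuinely uses $\nu>0$.
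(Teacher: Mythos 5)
Your proof is correct and follows essentially the same route as the paper: the factorization $\d(\DT_j,k_2^2)=(1-k_2^2)[\cdots]$ is exactly the paper's identity \eqref{e:ddiff} specialized to roots of $\d(\cdot,1)$, and your Part~2 comparison $\d(\DT,k_2^2)\leq\d(\DT,1)$ for $\DT\geq 0$ together with Lemma~\ref{l:bound} is the explicit version of the paper's terse sign observation. You additionally spell out the simplicity of $\pm\rmi\omega_j$ via $\sqrt{\ds}=C_3(k_c)>0$, the spectral gap from discreteness of $\sigma(\L)$, and the upgrade from locally primary to $S=\R$ in Part~2, all of which the paper leaves implicit.
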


The point of the theorem is that it provides conditions (Hypothesis~\ref{h:prima} or $\ell\leq 2\sqrt{2}$) under which the destabilizing mode for increasing and decreasing $\DT$ is known, namely the lowest spatial harmonic. Note that the values of $\ell$ in particular include the case $\ell=1$ considered in \cite{daniel1}.

\begin{proof}
\begin{enumerate}
\item A direct calculation gives 
\begin{align}\label{e:ddiff}
\frac{\d(\DT,1)-\d(\DT,\kappa_2)}{\kappa_2 - 1} &= \frac{16 \DT \kappa_2 \ell^4 L_1^3 - (4 + \ell^2) (4 \kappa_2 + \ell^2) (\ell^4-16 \kappa_2) \nu^2 \pi^4}{\kappa_2 \ell^4 (4 + \ell^2) (4 \kappa_2 + \ell^2)}.
\end{align}
In particular, Hypothesis~\ref{h:prima} is indeed equivalent to a $1$-instability region being locally primary. The claims on $\DT$ follow readily from inspection of the zeros of $\d$. 
\item This is the trivial observation that the right hand side in condition \ref{k2_1_primary_cond} is strictly negative for these values of $\ell$, while the left hand side is positive at all possible real roots $\Delta T$ of $d(\cdot, \kappa_2)$ on account of Lemma~\ref{l:bound}.
\end{enumerate}
\end{proof}

\begin{remark}
The critical frequencies in the original $x_2$-variable of \eqref{eq1} are in fact
\[
\omega_j = \frac{T^+_j + T^-_j}{L_2}\pi.
\]
\end{remark}

Now, we are going to present a condition, which guarantees that other destabilization scenarios also occur.
\begin{corollary}\label{c:explicit}
Let $\kappa_2>1$ and let $\ell$ be the unique positive solution $\ell=\ell_{\kappa_2}$ of 
\begin{equation}\label{e:counter}
\ell^6 - \kappa_2 \ell^4 - 80\kappa_2 \ell^2 - 64 \kappa_2 (2 + \kappa_2)=0.
\end{equation}
Then for $\nu=\nu_\crit(1)$ the $1$-instability region is a point, $\DT_1=\DT_2$, that coincides with $\DT_2(\kappa_2)$.  Notably, $\ell_{\kappa_2}$ is strictly increasing in $\kappa_2$.
\end{corollary}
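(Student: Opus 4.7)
The plan is to exploit the explicit parameterization of the degenerate $1$-instability at $\nu = \nu_\crit(1)$ to reduce the simultaneous-instability condition to an algebraic equation, then analyze the resulting cubic by elementary calculus.

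First I would observe that at $\nu = \nu_\crit(1)$ the parabola $\d(\cdot,1)$ has, by \eqref{inst_condred}, a unique (double) root located at the maximum \eqref{e:maxloc} with $\kappa_2=1$, namely
\[
\DT^\ast := \frac{2\ell^2 L_1}{(4+\ell^2)\pi^2},
\]
so $\DT_1 = \DT_2 = \DT^\ast$. The assertion $\DT^\ast = \DT_2(\kappa_2)$ is then equivalent to the single equation $\d(\DT^\ast,\kappa_2)=0$, together with the information that $\DT^\ast$ lies to the right of the maximum of $\d(\cdot,\kappa_2)$; the latter is immediate from \eqref{e:maxloc} since, for $\kappa_2>1$, the maximum of $\d(\cdot,\kappa_2)$ occurs at $\frac{2\ell^2 L_1}{(4\kappa_2+\ell^2)\pi^2}<\DT^\ast$, so the common value is automatically the larger root.

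Next I would substitute $\DT^\ast$ and $\nu_\crit(1)^2 = 4\ell^6 L_1^4/((4+\ell^2)^4\pi^6)$ into \eqref{fun1red}, clear denominators by multiplying through by $\kappa_2(4+\ell^2)^4(4\kappa_2+\ell^2)$, and organize the result as a polynomial in $u:=\ell^2$ and $\kappa_2$. By construction $\kappa_2=1$ is a (spurious) solution, since both sides of $\DT^\ast = \DT_2(\kappa_2)$ trivially agree at $\kappa_2=1$; hence $(\kappa_2-1)$ divides the polynomial, and dividing out yields a cubic in $u$ equivalent to \eqref{e:counter}. This is the only step involving any real algebraic labor, and I expect it to be the main obstacle in the sense of bookkeeping rather than ideas.

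The remaining analysis of the cubic $p(u,\kappa_2):=u^3-(\text{linear in }\kappa_2)u^2-80\kappa_2 u-64\kappa_2(\kappa_2+2)$ is routine. At $u=0$ we have $p=-64\kappa_2(\kappa_2+2)<0$, while $p\to +\infty$ as $u\to\infty$; the derivative $\partial_u p = 3u^2 - (\text{linear in }\kappa_2)u - 80\kappa_2$ is a quadratic with a single positive root, so $p(\cdot,\kappa_2)$ first decreases then increases on $(0,\infty)$ and crosses zero exactly once. This produces a unique positive $u_{\kappa_2}$, and $\ell_{\kappa_2}:=\sqrt{u_{\kappa_2}}$.

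Finally, monotonicity in $\kappa_2$ follows from the implicit function theorem: inspection gives $\partial_{\kappa_2} p < 0$ for all $u>0$ and $\kappa_2>0$ (every term in $\partial_{\kappa_2} p$ is manifestly negative), while $\partial_u p(u_{\kappa_2},\kappa_2)>0$ from the sign-change argument above. Therefore $du_{\kappa_2}/d\kappa_2 = -\partial_{\kappa_2}p/\partial_u p > 0$, so $\ell_{\kappa_2}$ is strictly increasing in $\kappa_2$, completing the proof.
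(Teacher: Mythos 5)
Your argument follows the paper's proof quite closely: both exploit that at $\nu=\nu_\crit(1)$ the critical $\DT$ is the double root $\DT^\ast$ at the parabola's maximum, substitute into $\d(\DT^\ast,\kappa_2)=0$, divide out the spurious factor $(\kappa_2-1)$ (which the paper has pre-packaged in the quotient \eqref{e:ddiff}), and then analyze the resulting cubic in $\ell^2$ by a sign-change argument plus implicit differentiation. You make one genuine addition: you justify, by comparing the maximum location \eqref{e:maxloc} at $\kappa_2>1$ against that at $\kappa_2=1$, that $\DT^\ast$ must be the \emph{larger} root $\DT_2(\kappa_2)$ rather than $\DT_1(\kappa_2)$; the paper's proof establishes only $\d(\DT^\ast,\kappa_2)=0$ and leaves that identification implicit. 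The remaining analysis of the cubic and the monotonicity claim are equivalent in the two proofs (you work with $p=-q$, but the sign reasoning is the same).
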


This means that the $1$-instability region is not primary. In fact, it is also not primary for nearby parameter values that produce $\DT_2(4)>\DT_2(1)$. The solution to \eqref{e:counter} for $\kappa_2=4$ is $\ell_4\approx 5.37$, and for $\kappa_2=9$ it is $\ell_9\approx 7.22$. See Figure~\ref{f:counter}. For $\ell$ between these value (and slightly above $\ell_9$), we numerically find that the $2$-instability region is primary. We omit the tedious analysis.
In general, for any given $k_2$ Hypothesis~\ref{h:prima} is violated for sufficiently large $\ell$ (with $\nu,L_1$ fixed), since $\DT$ is bounded (Lemma~\ref{l:bound}).

\begin{remark}
It is possible to show that for $\nu$ small enough, there is a primary 1-instability region, if $\ell < \ell^{\ast} \approx 4.053$, where $\ell^{\ast}$ is the unique positive root of the polynomial $16(\ell^2 + 4)^2 - (\ell^2 - 8) (\ell^2 + 8) (\ell^2 + 16)=0$. 
\end{remark}

\begin{figure}[H]
\begin{center}
\begin{tabular}{cc}
\includegraphics[scale = 0.55]{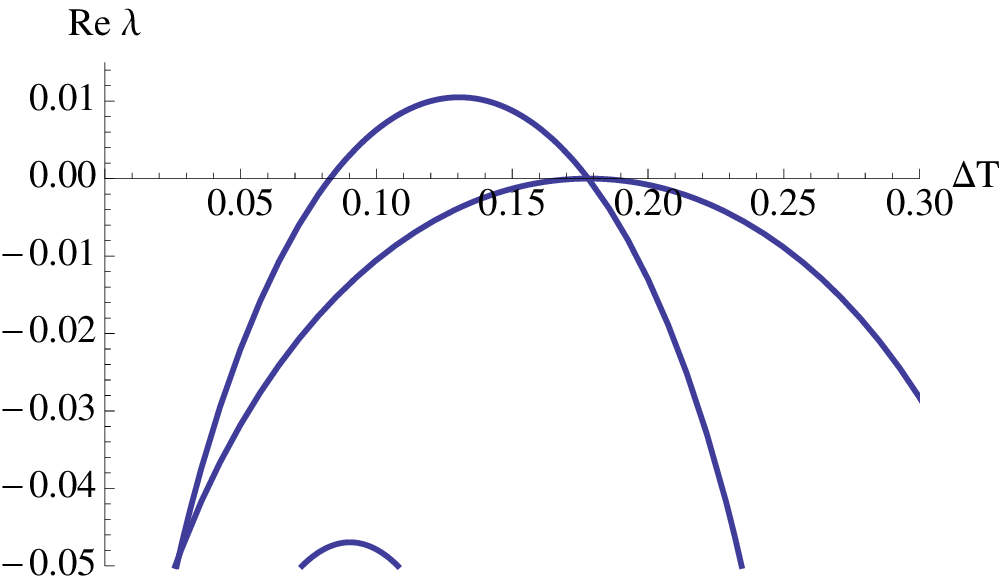}\hspace{5mm}
& \includegraphics[scale = 0.55]{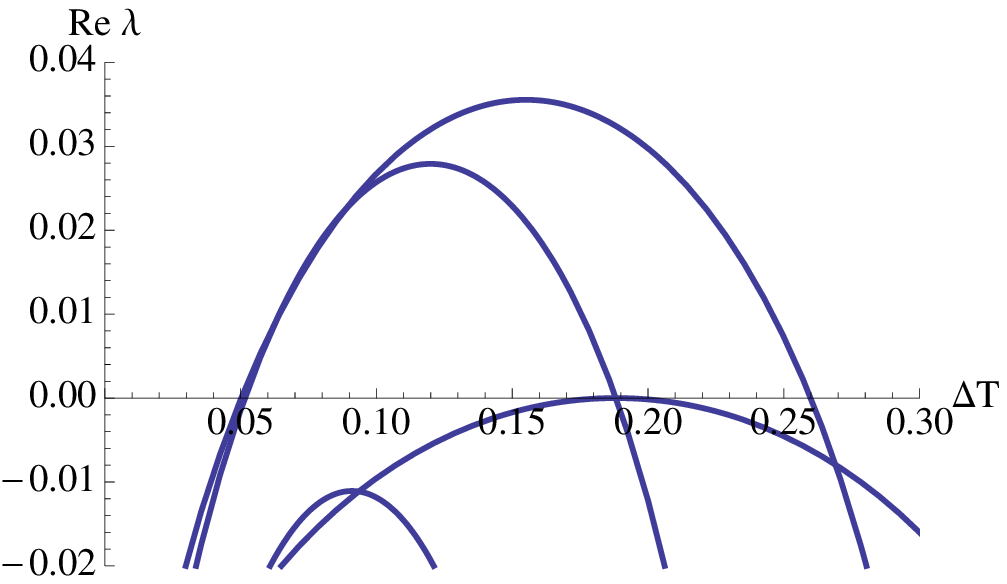}\\
(a) & (b)
\end{tabular}
\caption{\label{f:counter}Real parts of eigenvalues $\lambda_k^+$, $k=(1,k_2)$, as functions of $\DT$. The parabola are ordered in $k_2$ by the decreasing location of maxima. Here $\nu=\nu_\crit(1)$ so that $\DT_1=\DT_2$. (a) $\ell=\ell_4$, with near primary $2$-instability region. (b) $\ell=\ell_9$, with primary $2$-instability region.}
\end{center}
\end{figure}

\begin{proof}[Corollary~\ref{c:explicit}]
Substituting the critical $\nu^2= \frac{4 l^6 L_1^4}{(4+l^2)^4 \pi^6}$ from \eqref{inst_condred} and the corresponding critical value of $\DT = \frac{2 \ell^2 L_1}{\pi^2 (4+\ell^2)}$ at the double root into the nominator of the right hand side of \ref{e:ddiff} gives
\[
\frac{4 \ell^6 L_1^4}{(4 + 
   \ell^2)^3 \pi^2}(64 \kappa_2 (2 + \kappa_2) + 80 \kappa_2 \ell^2 + 4 \kappa_2 \ell^4 - \ell^6),
\]
where $\kappa_2=k_2^2$. The first factor is positive and roots of the second factor, which we denote by $q$, precisely solve \eqref{e:counter}. We have
\[
\partial_{(\ell^2)} q = 80 \kappa_2 + 8 \kappa_2 \ell^2 - 3 \ell^4,
\]
which is positive at $\ell=0$ so that the cubic $q$ with negative cubic coefficient has a unique positive root. In addition, this implies that $\partial_\ell q<0$ at this root so that together with
\[
\partial_{\kappa_2} q = 4 (32 + 32 \kappa_2 + 20 \ell^2 + \ell^4) > 0
\]
we infer from implicit differentiation that the location of this root strictly increases with $\kappa_2$.
\end{proof}

\medskip
For the case of small viscosity (and $\ell>2.8$), we omit the somewhat tedious detailed analysis for the destabilizing left endpoint. However, we immediately obtain the following.

\begin{corollary}\label{c:nusmall}
As $\nu\to 0$, $\L$ has $k_2$-instability regions for $k_2\to \infty$ with $\Delta T_1(k_2^2)<\Delta T_2(k_2^2)$. For sufficiently small $\nu$, the conditions of Hypothesis~\ref{hypoth1} are satisfied at $\DT_2(1)$, and this is an instability threshold.
\end{corollary}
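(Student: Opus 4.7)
The statement has two parts that I would treat separately.

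For the first part, I would use \eqref{inst_condred}: a $k_2$-instability region with $\DT_1(k_2^2)<\DT_2(k_2^2)$ exists precisely when $\nu<\nu_\crit(k_2^2)$. Since $\nu_\crit(\kappa_2)=2\sqrt{\kappa_2}\ell^3L_1^2/[(4\kappa_2+\ell^2)^2\pi^3]\sim \ell^3L_1^2/(8\kappa_2^{3/2}\pi^3)$ as $\kappa_2\to\infty$, the condition $\nu<\nu_\crit(k_2^2)$ holds for a set of $k_2$ whose upper envelope scales like $\nu^{-1/3}$ and hence diverges as $\nu\to 0$, establishing the first claim.

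For the second part, I would verify Hypothesis~\ref{hypoth1} at $\DT=\DT_2(1)$ for $\nu>0$ small. By Remark~\ref{r:thres}, $\DT_2(1)\to T^*:=4\ell^2L_1/[(4+\ell^2)\pi^2]>0$ as $\nu\to 0$. By construction $\d(\DT_2(1),1)=0$, giving $\Re\lambda_{k_c}^\pm=0$; moreover at this point $\sqrt{\ds_{k_c}}=C_3(k_c)>0$, so $\ds_{k_c}>0$ and $\lambda_{k_c}^+\neq\lambda_{k_c}^-$, giving a conjugate pair $\pm\rmi\omega_2$ with $\omega_2=\pi\DT_2(1)/(\ell L_1)$. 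The bulk of the work is showing $\Re\lambda_k^\pm<-\gamma$ for all $k\ne\pm k_c,\pm\overline{k}_c$, which simultaneously ensures that the pair $\pm\rmi\omega_2$ is simple in $\L$.

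I would split the remaining modes into three sectors. Sector (i), $k_2=0$: the matrix $M_k$ of \eqref{eq16} has $C_1=C_2=0$, so both eigenvalues equal $-C_3(k)\le -\pi^2\nu/L_1^2<0$. Sector (ii), $k_1\ge 2$: by the monotonicity $\Re\lambda_{(k_1,k_2)}^+<\Re\lambda_{(1,k_2)}^+$ from Lemma~\ref{l:spec}, this reduces to the third sector. Sector (iii), $k=(1,k_2)$ with $|k_2|\ge 2$: applying identity \eqref{e:ddiff} at $\DT=\DT_2(1)$ and using $\d(\DT_2(1),1)=0$ yields
\begin{equation*}
\d(\DT_2(1),\kappa_2)=-(\kappa_2-1)\,\frac{16\,\DT_2(1)\,\kappa_2\ell^4L_1^3-(4+\ell^2)(4\kappa_2+\ell^2)(\ell^4-16\kappa_2)\nu^2\pi^4}{\kappa_2\ell^4(4+\ell^2)(4\kappa_2+\ell^2)}.
\end{equation*}
As $\nu\to 0$ this tends to $-(\kappa_2-1)\cdot 16T^*L_1^3/[(4+\ell^2)(4\kappa_2+\ell^2)]$, which is strictly negative for $\kappa_2\ge 4$ and uniformly bounded away from zero.

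The main obstacle is promoting the estimate $\d<0$ into a genuine real-part gap $\Re\lambda<-\gamma$. Here I would argue by compactness: since $C_3(k)\to\infty$ as $|k|\to\infty$ while $\ds_k$ stays bounded at fixed $\DT$, only finitely many modes $k$ can lie in any strip $\{\Re z>-\gamma_0\}$, and on this finite set each $\Re\lambda_k^\pm<0$ strictly by the three cases above; a finite minimum delivers the required $\gamma>0$. Finally, $\DT_2(1)$ is indeed an instability threshold because $\partial_\DT \d(\DT_2(1),1)<0$ (the downward-opening parabola $\d(\cdot,1)$ crosses zero transversely at its right root whenever $\nu<\nu_\crit(1)$), which, combined with continuity of the remainder of the spectrum in $\DT$, gives a stability transition across $\DT_2(1)$.
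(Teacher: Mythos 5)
Your proposal is correct and takes essentially the same route as the paper: both parts rest on \eqref{inst_condred} and \eqref{e:ddiff} evaluated at $\DT_2(1)$, with your version supplying the sector decomposition, the compactness argument for the uniform spectral gap $\gamma$, and the transversality of the crossing that the paper's much terser proof leaves implicit.
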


\begin{proof}
The presence of all $k_2$-instability regions clearly holds at $\nu=0$ in view of \eqref{inst_condred}. In addition, from \eqref{e:ddiff} we infer at $\nu=0$ that
\[
\d(\DT,1)-\d(\DT,\kappa_2) > 0,
\]
so that the critical eigenfunction at the right endpoint of the instability interval has mode number $k_2 = \pm 1$. This persists for sufficiently small $\nu > 0$, since the thresholds depend continuously on $\nu$, and again from \ref{e:ddiff} we see that for each $\nu>0$ there is only a finite range of $\kappa_2$ values, for which $d(\Delta T,1) - d(\Delta T,\kappa_2) < 0$ is possible. 
\end{proof}

Lastly, we point out the possibility of multiple disjoint primary $k_2$-instability regions, where changing $\DT$ destabilizes and stabilizes multiple times. In  Figure~\ref{f:mult} we plot eigenvalue curves, where two $k_2$-instability regions consist of a point. Parameters $\nu=\nu_\crit(k_2)=\nu_\crit(k_2')$ and $\ell$ that produce such scenarios can be readily computed from \eqref{inst_condred}; here we take $k_2=1$, $k_2'=4$.  For perturbed $\nu<\nu_\crit(1)$ the instability regions become disjoint open intervals.

\begin{figure}[H]
\begin{center}\includegraphics[scale = 0.55]{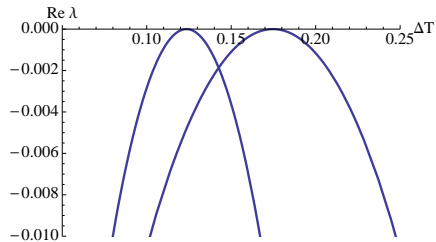}
\caption{\label{f:mult}Real parts of eigenvalues $\lambda_k^+$, $k=(1,k_2)$, as functions of $\DT$. Here $\ell=2 \sqrt{2 + 3 \sqrt{2}}$ so that $\nu_\crit(1)=\nu_\crit(4)$. In the region plotted only the curves with $k_2=1,4$ are present.}
\end{center}
\end{figure}

\begin{remark}\label{r:sndHopf}
On account of \eqref{inst_condred}, for decreasing $\nu$ and also for increasing $\ell$, there is an increasingly long sequence of secondary instabilities of Andronov-Hopf type as $\Delta T$ increases from zero, with higher and higher spatial harmonics, and another reverse sequence as $\Delta T$ reaches $\Delta T_*$. See Figure~\ref{fig:fig2}.
\end{remark}

\section{Center manifold reduction}\label{s:center}

In this section, we consider the vicinity of parameters with critical $\DT = \DT_j$ for $j=1$ or $j=2$ and assume that no other eigenvalues lies on the imaginary axis. The main example is a primary $1$-instability region. For the unfolding of the bifurcation in the generic case $\DT_1 < \DT_2$ we introduce the parameter $\mu_1$ by $\DT = \DT_j + \mu_1$. In the degenerate case $\DT_1 = \DT_2$, where $\nu=\nu_\crit(1)$, we additionally unfold with $\mu_2$ defined by $\nu = \nu_\crit(1) - \mu_2^2$.
For readability we frequently suppress the index $j$. 

At bifurcation, the critical eigenvalues are then $\pm \rmi \omega$ and we denote the associated eigenfunctions by $\zeta(x):=\zeta_{k_c}(x)$, $\overline{\zeta}(x)$, see \eqref{e:zeta}. Then $\L$ possesses a two-dimensional real central subspace $E_c:=\mathrm{span}\{\Re \zeta, \Im \zeta\} \subset Z^2$ and we will show that there is a locally invariant 2D center manifold
\[
W_c = \{ u_0 + \psi(u_0,\mu): u_0 \in O_{E_c} \} \subset O_{Z^2}\mbox{ , }\mu \in O_{\R^2}, 
\]
with $\psi:O_{E_c}\to E_c^\sharp$, $E_c\oplus E_c^\sharp=Z^2$, and neighbourhoods $O_{\R^2}$ of $\mu=0$, and $O_{E_c}$, $O_{Z^2}$ of $0\in Z^2$. In case of a primary bifurcation the center manifold is also locally exponentially attracting.

Since we consider $k=k_c=(1,1)$, it is not surprising that the coefficients $C_m(k_c)$ defined in \eqref{eq16} show up. It turns out that following modifications are convenient.
\begin{equation}
c_1 := \frac{\pi \Delta T}{L_2} \mbox{ , }c_2 := \frac{2}{\pi\big( \frac{L_2}{L_1} + \frac{4 L_1}{L_2} \big)} \mbox{ , }
c_3 := \nu \pi^2 \bigg( \frac{1}{L_1^2} + \frac{4}{L_2^2} \bigg). \label{eq_const}
\end{equation}

We first consider the generic case of \eqref{inst_condred}, where the unfolding goes by $\mu_1$ only. 

\begin{theorem}\label{theorem2}
Assume that Hypothesis~\ref{hypoth1} holds for a fixed parameter set for which $\DT_1<\DT_2$. Then the steady state $u=0$ of system \eqref{eq3} possesses a locally exponentially attracting and locally invariant 2D center manifold near $u=0$ with the reduced dynamics
\begin{equation}\label{eq9}
\frac{d z} {dt} = \rmi \omega z + \mu_1 a z + b z |z|^2 +  \mathcal{O}(|z|(|\mu|+|z|^2)^2),
\end{equation}
where
\begin{equation}
\begin{aligned}
\omega &= \frac{\pi\DT}{L_2},\\
a &= \frac{2 \pi c_2^2}{L_2 (c_1 - c_3\rmi) (c_1\rmi - 2 c_2\rmi -c_3)},\\
b &= - \frac{L_1^3}{4 \pi^2 \nu}\frac{c_1^2 + c_3^2}{\frac{L_2^2}{L_1}+4 L_1}.
\label{eq_norm_form}
\end{aligned}
\end{equation}
\end{theorem}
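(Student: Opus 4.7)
The plan is to apply a standard center manifold reduction for semilinear parabolic equations to the abstract form \eqref{eq3} regarded as a family in $\mu_1$. The required hypotheses are met: $\L$ is sectorial with compact resolvent (recorded in \S\ref{refor}); $R=B(u,u)$ is a fixed symmetric bilinear form, hence analytic with $R(0)=0$ and $DR(0)=0$; and Hypothesis~\ref{hypoth1} furnishes a 2D central subspace $E_c=\mathrm{span}\{\Re\zeta,\Im\zeta\}$ with uniform spectral gap. Because $\sigma_+(\L)=\emptyset$ at the bifurcation, the resulting center manifold $W_c$ is in addition locally exponentially attracting. Parametrize $W_c$ by $u=z\zeta+\bar z\bar\zeta+\Psi(z,\bar z,\mu_1)$, where $\zeta=\xi_c g_{k_c}$ is the critical eigenfunction from Lemma~\ref{l:spec}, $\zeta^*=\xi^*_c g_{k_c}$ is the dual eigenfunction normalized by $\langle\zeta,\zeta^*\rangle_2=1$, and $\Psi$ is of order $|z|^2+|\mu_1|$.

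Since $\DT$ enters $\L$ only through the advection $\DT\partial_{x_2}u_1$, standard perturbation theory yields $a=\langle\partial_{\mu_1}\L\,\zeta,\zeta^*\rangle_2=\partial_{\mu_1}\lambda^+_{k_c}\big|_{\mu_1=0}$; the generic assumption $\DT_1<\DT_2$ ensures $\ds>0$ at $\DT_j$, so the branch of \eqref{e:lam} is smooth there. Direct differentiation of \eqref{e:lam}--\eqref{discr} in $\DT$, followed by substitution of the abbreviations \eqref{eq_const}, produces the announced formula for $a$; in particular $\Re(a)\neq 0$, which is the Hopf transversality condition. For the cubic coefficient $b$, expand $\Psi=\Psi_{20}z^2+\Psi_{11}z\bar z+\Psi_{02}\bar z^2+\calO(\cdot)$, substitute into \eqref{eq3}, and match quadratic monomials in $(z,\bar z)$ to obtain
\begin{align*}
(2\rmi\omega-\L)\Psi_{20}&=Q_sB(\zeta,\zeta),\\
-\L\,\Psi_{11}&=2Q_sB(\zeta,\bar\zeta),
\end{align*}
with $Q_s$ the spectral projection onto the stable subspace; both systems are uniquely solvable since $0,2\rmi\omega\notin\sigma(\L)$ by Hypothesis~\ref{hypoth1}.

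The crucial simplification is the identity $\sin A\cos A=\tfrac12\sin 2A$: unpacking $B(\zeta,\zeta)$ and $B(\zeta,\bar\zeta)$ via \eqref{e:Agk} shows that each is supported on a single Fourier mode---$g_{(2,2)}$ and $g_{(2,0)}$ respectively---so the two resonance equations reduce to $2\times 2$ complex linear systems read off from \eqref{eq13}--\eqref{eq16} at $k=(2,0)$ (where $M_{(2,0)}=-C_3(2,0)I$ is a scalar multiple of the identity) and $k=(2,2)$. Solving these explicitly and plugging back into the standard resonance formula
\[
b=\bigl\langle 2B(\zeta,\Psi_{11})+2B(\bar\zeta,\Psi_{20}),\,\zeta^*\bigr\rangle_2
\]
yields the closed form in \eqref{eq_norm_form} after simplification with \eqref{eq_const}. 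The main obstacle is the algebraic bookkeeping: the $\zeta^*$-projection must be carried through several bilinear substitutions, and the compactness of the final expression---in particular the independence from $c_2$ and the appearance of the stabilizing factor $c_1^2+c_3^2$---reflects cancellations tied to the special structure of $A^\perp$ paired with $\nabla$ and to the scalar form of $M_{(2,0)}$.
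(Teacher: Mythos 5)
Your overall strategy mirrors the paper's: verify the Haragus--Iooss center manifold hypotheses, compute $a$ as the parameter derivative of the critical eigenvalue (which equals $\langle R_{11}\zeta,\zeta^*\rangle_2$, the formula the paper uses), and obtain $b$ by solving the quadratic resonance equations and projecting with $\zeta^*$. The identification of the $\Psi_{11}$ equation with the scalar matrix $M_{(2,0)}$ is also correct.

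There is, however, one substantive inaccuracy that matters. You assert that $B(\zeta,\zeta)$ is supported on the single mode $g_{(2,2)}$ and propose to solve a nontrivial $2\times 2$ system for $\Psi_{20}$. In fact $B(\zeta,\zeta)\equiv 0$. The reason is not the trig identity $\sin A\cos A=\tfrac12\sin 2A$ (which only explains why the $g_{(2,0)}$ mode shows up in $B(\zeta,\bar\zeta)$); it is a cancellation \emph{inside} the dot product: since $\zeta$ is supported on the single mode $g_{1,1}$,
\[
A^\perp g_{1,1}\cdot\nabla g_{1,1}=A_2 g_{1,1}\,\partial_{x_1}g_{1,1}-A_1 g_{1,1}\,\partial_{x_2}g_{1,1}
= -\tfrac{2\rmi}{\gamma}\,g_{1,1}\phi_{1,1}+\tfrac{2\rmi}{\gamma}\,\phi_{1,1} g_{1,1}=0,
\]
with $\gamma=\tfrac{L_2}{L_1}+4\tfrac{L_1}{L_2}$. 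This is exactly the remark made earlier in the paper that on each Laplacian eigenspace the nonlinearity vanishes. It forces $\Psi_{20}=0$, so the term $\langle 2B(\bar\zeta,\Psi_{20}),\zeta^*\rangle_2$ drops out of $b$; this, not a generic single-mode reduction, is what makes $b$ computable in closed form and, as you correctly observed, independent of $c_2$. If you actually ``unpack $B(\zeta,\zeta)$ via \eqref{e:Agk}'' as planned you would discover this, so the gap is repairable, but as written the sketch asserts a nonzero right-hand side where the paper's key structural simplification is precisely that it vanishes; you should make that cancellation explicit rather than treat $\Psi_{20}$ as a generic quantity to be solved for.
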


The following corollary proves the nature of the resulting bifurcations, see Fig. \ref{fig:fig1}(a) for an illustration.

\begin{corollary}
\label{c:hopf}
Assume the Hypotheses of Theorem~\ref{theorem2}. Then the steady state $u=0$ of system \eqref{eq3} undergoes a generic supercritical Andronov-Hopf bifurcations as $\mu_1$ varies. Specifically, the reduced vector field coefficients satisfy $b<0$, $\Im(a)=\calO(\nu^2)$, and $\mathrm{sgn}(\Re a) = -(-1)^j$ at $\Delta T= \Delta T_j$. 

In particular, near the stability thresholds there exist heteroclinic connections between the unstable steady-state and the stable limit cycle. 

As $\nu \downarrow 0$, the radius of the limit cycles, $|z(t)|$, scales near $\Delta T_1$ as $|z(t)| \propto \nu^{-1}\sqrt{\Delta T - \Delta T_1}$, and near $\Delta T_2$ as $|z(t)| \propto \sqrt{\Delta T_2 - \Delta T}$.
\end{corollary}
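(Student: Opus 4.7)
The strategy is to verify each algebraic property of the normal-form coefficients claimed in the corollary, invoke the standard supercritical Andronov--Hopf theorem, and finally read off the amplitude asymptotics from the truncated normal form on the center manifold.

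That $b<0$ is immediate from \eqref{eq_norm_form}: $c_1^2+c_3^2>0$ and all remaining factors $L_1,\nu,L_1^3,L_2^2/L_1+4L_1$ are positive, so the overall minus sign gives $b<0$. For $\Im a = \calO(\nu^2)$, I would expand the denominator in the formula for $a$ and substitute the threshold identity satisfied by $c_1,c_2,c_3$: the condition $\Re\lambda^+_{k_c}|_{\DT_j}=0$ is equivalent to $D_{k_c}|_{\DT_j}=c_3^2$, which, rewritten using \eqref{discr} and \eqref{eq_const}, becomes $c_1^2-2c_1c_2+c_3^2=0$ at each $\DT_j$. Substituting this into the real/imaginary decomposition of the denominator of $a$ produces the claimed bound on $\Im a$.

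For the sign of $\Re a$, I would use that in the center-manifold reduction at a simple eigenvalue the linear coefficient $a$ agrees in its real part with $\partial_{\Delta T}\Re\lambda^+_{k_c}|_{\DT_j}$. Differentiating $\Re\lambda^+_{k_c}=-c_3+\sqrt{D_{k_c}}$ and using $\sqrt{D_{k_c}}|_{\DT_j}=c_3$ gives
\[
\partial_{\Delta T}\Re\lambda^+_{k_c}\Big|_{\DT_j} = \frac{\pi\,(c_2 - c_1(\DT_j))}{L_2\,c_3}.
\]
The two real roots of $c_1^2-2c_1c_2+c_3^2=0$ are $c_1=c_2\pm\sqrt{c_2^2-c_3^2}$, selecting $c_1<c_2$ at $\DT_1$ (so $\Re a>0$) and $c_1>c_2$ at $\DT_2$ (so $\Re a<0$), matching $\mathrm{sgn}(\Re a)=-(-1)^j$. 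Combined with $b<0$ and the transverse eigenvalue crossing, the standard supercritical Andronov--Hopf theorem yields a locally unique branch of orbitally asymptotically stable periodic orbits emerging into the unstable region $(\DT_1,\DT_2)$ from each endpoint.

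The heteroclinic connection follows from the planar phase portrait of the truncated normal form on the 2D exponentially attracting center manifold: inside the instability region the origin is an unstable focus surrounded by a stable limit cycle, so every nontrivial trajectory in the unstable manifold of $0$ spirals outward and accumulates on the cycle. For the amplitude scaling, the polar form of the truncated normal form is $\dot\rho=\mu_1\,\Re a\,\rho+\Re b\,\rho^3$, with nontrivial equilibrium $\rho^2=-\mu_1\,\Re a/\Re b$. It remains to track the $\nu$-orders at each endpoint. From Remark~\ref{r:thres}, $\DT_1=\calO(\nu^2)$, so $c_1=\calO(\nu^2)$ and $c_3=\calO(\nu)$; then $\Re a\propto\nu^{-1}$ and $\Re b\propto (c_1^2+c_3^2)/\nu\propto\nu$, giving $|z|\propto\nu^{-1}\sqrt{\DT-\DT_1}$. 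At $\DT_2$, $c_1\to 2c_2$ of order $1$ as $\nu\downarrow 0$, hence $c_1^2+c_3^2=\calO(1)$, $\Re a\propto\nu^{-1}$, $\Re b\propto\nu^{-1}$, yielding $|z|\propto\sqrt{\DT_2-\DT}$. The main obstacle is the $\nu$-order bookkeeping together with establishing the threshold identity $c_1^2-2c_1c_2+c_3^2=0$ in the expression for $a$; once this is in hand, the remaining steps are applications of standard bifurcation theory and scaling.
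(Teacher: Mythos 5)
Your proof is correct and follows essentially the paper's route. The one notable shortcut is your treatment of $\Re a$: you invoke the standard eigenvalue-perturbation identity $\Re a = \partial_{\Delta T}\Re\lambda^{+}_{k_c}\big|_{\DT_j}$ (which holds here because $\psi_{001}=0$, so $a=\langle R_{11}\zeta,\zeta^\ast\rangle$ is exactly the simple-eigenvalue derivative), and then read the sign off from $c_1\lessgtr c_2$ at the two roots of the quadratic threshold relation $c_1^2-2c_1c_2+c_3^2=0$. The paper instead substitutes the explicit eigenvector/adjoint pair into \eqref{eq_norm_form}, arrives at \eqref{e:ac}, and then ties the sign of $c_2-c_1$ to the slope $\partial_{\Delta T}d(\DT,1)$ at the two roots; this is the same conclusion, reached by a parallel computation. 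Your version is a bit leaner and sidesteps the normalization constant $\delta$. The remaining pieces — $b<0$ by inspection of \eqref{eq_norm_form}, $\Im a=\calO(\nu^2)$ from the same threshold relation, the heteroclinic from the phase portrait of the truncated planar normal form on the attracting 2D center manifold, and the $\nu$-bookkeeping (introducing $c_3/\nu=\calO(1)$, expanding $c_1=c_2\mp\sqrt{c_2^2-\nu^2(c_3/\nu)^2}$ at $\DT_1$ resp.\ $\DT_2$, and reading $|z|^2=-\mu_1\Re a/\Re b$) — all agree with the paper's argument in substance.
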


\medskip
Before giving the proof, we formulate the result for unfolding the codimension-2 case $\Delta T_1=\Delta T_2$, where the critical eigenvalues do not transversely cross the imaginary axis.

\begin{theorem}\label{theorem3}
Assume that Hypothesis~\ref{hypoth1} holds for a fixed parameter set for which $\Delta T_1=\Delta T_2$.  Then the steady state $u=0$ of system \eqref{eq3}  possesses a locally exponentially attracting and locally invariant 2D center manifold near $u=0$ with the reduced dynamics
\begin{align}\label{eq9deg}
\frac{d z} {dt} &= \rmi (\omega+ a_0\mu_1) z + a_1\mu_1\left(a_2\mu_2 - a_3\mu_1\right) z + b z |z|^2 +  \mathcal{R}\\
\mathcal{R} &= \calO\left(\mu_2^2+ |\mu_1\mu_2^3| + |z|(|\mu|+|z|^2)^2\right),\nonumber
\end{align}
where $a_j\in\R$, $j=0,1,2,3$, are given by $a_0 = \Im(a)$,
\[
a_1 = \frac{\Re(a)\pi}{L_2(c_2-c_1)},\; a_2= \frac 1{\sqrt{\pi L_1L_2}}, \; a_3= \frac{\pi}{L_2^2},
\]
and $a$, $b$ are the constants from Theorem~\ref{theorem2}.

In particular, for $0<|\mu_2|\ll 1$, there exists a branch of stable periodic orbits, that is parametrized by $\mu_1$ and that terminates in supercritical Andronov-Hopf bifurcations at $\DT+\mu_1=\Delta T_j$, $j=1,2$.
\end{theorem}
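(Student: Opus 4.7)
The strategy follows the center-manifold and normal-form approach of Theorem~\ref{theorem2}, with the additional work of unfolding a codimension-two degeneracy in two parameters. At $(\mu_1,\mu_2)=0$, Hypothesis~\ref{hypoth1} still yields $\sigma_0(\L)=\{\pm\rmi\omega\}$ simple and $\sigma_+(\L)=\emptyset$, so the parametric center manifold theorem produces a smooth, locally exponentially attracting 2D manifold $W_c$ depending on $\mu=(\mu_1,\mu_2)$. In the complex coordinate $z$ aligned with the critical eigenfunction $\zeta$, the reduced ODE takes the form $\dot z=\alpha(\mu)z+b(\mu)z|z|^2+\calO(|z|(|\mu|+|z|^2)^2)$. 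The cubic coefficient $b(0)$ is computed exactly as in the proof of Theorem~\ref{theorem2}, since its derivation uses only the critical eigenvector $\zeta$, the spectral projector onto $E_c$, and the quadratic nonlinearity $R$, all of which are unchanged by the choice of parameter path; in particular $b(0)<0$ and by continuity $b(\mu)<0$ on a neighborhood of the origin in $\mu$-space.

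The linear-in-$z$ coefficient $\alpha(\mu)$ is the continuation of $\lambda_{k_c}^+$ from Lemma~\ref{l:spec}: $\alpha(\mu)=\lambda_{k_c}^+(\Delta T_c+\mu_1,\nu_\crit(1)-\mu_2^2)-\rmi\omega$. The imaginary part $\pi\Delta T/L_2$ is linear in $\Delta T$ and independent of $\nu$, giving the frequency shift $\rmi a_0\mu_1$ with $a_0=\pi/L_2$. For the real part $-c_3(\nu)+\sqrt{D_{k_c}(\Delta T)}$, the double-root condition $\partial_{\Delta T}D_{k_c}|_{\Delta T_c}=0$ together with $\nu=\nu_\crit(1)-\mu_2^2$ forces both $\partial_{\mu_1}\Re\alpha|_0=0$ and $\partial_{\mu_2}\Re\alpha|_0=0$, so $\Re\alpha$ starts at order $|\mu|^2$. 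Taylor expansion using $\partial_{\Delta T}^2 D_{k_c}|_{\Delta T_c}=-2\pi^2/L_2^2$ and $\partial_\nu c_3$, together with identification of coefficients via the formula for $a$ of Theorem~\ref{theorem2} evaluated at the degeneracy, yields the claimed factored form $a_1\mu_1(a_2\mu_2-a_3\mu_1)$ modulo a residual pure $\mu_2^2$-term absorbed into $\mathcal R=\calO(\mu_2^2+|\mu_1\mu_2^3|+\dots)$.

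For the periodic-orbit conclusion, fix $|\mu_2|>0$ small. By the analysis following Lemma~\ref{l:bound}, the polynomial $\d(\cdot,1)$ at $\nu=\nu_\crit(1)-\mu_2^2$ has exactly two simple roots $\Delta T_j(\mu_2)$, $j=1,2$, so $\mu_1\mapsto\Re\alpha(\mu_1,\mu_2)$ vanishes transversally at the two values $\mu_1=\Delta T_j(\mu_2)-\Delta T_c$. Combined with $b(\mu)<0$, a supercritical Andronov--Hopf bifurcation occurs at each endpoint by the same argument as Corollary~\ref{c:hopf}, producing an asymptotically stable limit cycle of amplitude $|z|\propto\sqrt{\Re\alpha/|b|}$. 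Because $b(\mu)$ remains bounded away from zero, this amplitude is smooth and positive throughout the interior of the instability interval in $\mu_1$, so the two locally defined families extend smoothly and meet to form a single arc of stable periodic solutions terminating at each Hopf endpoint. The main obstacle is the precise algebraic identification of $a_1,a_2,a_3$ that matches the natural Taylor expansion of $\alpha(\mu)$ (which to leading order consists of pure $\mu_1^2$ and $\mu_2^2$ terms, with the cross-derivatives vanishing by the structure of $\lambda_{k_c}^+$) to the stated factored form, and verifying that the $\mu_2^2$-contribution can be consistently absorbed into $\mathcal R$; this is legitimate because along the bifurcation curves $|\mu_1|$ is of order $|\mu_2|$, so $\mu_2^2$ is comparable to $\mu_1\mu_2$ and $\mu_1^2$ and does not affect the Hopf conclusion.
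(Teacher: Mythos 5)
Your proposal follows essentially the same outline as the paper's proof, and the final Hopf/arc conclusion --- via the two simple roots of $\d(\cdot,1)$ at $\nu=\nu_\crit(1)-\mu_2^2$ together with the supercriticality $b<0$ --- is sound. However, there are two genuine gaps in the normal-form part of your argument.

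First, you invoke ``the parametric center manifold theorem'' without addressing that $\mu_2$ changes the viscosity $\nu$, i.e.\ the coefficient of the \emph{highest-order} second-derivative terms in $\L$. The parameter-dependent center manifold theorems in Haragus--Iooss assume the unfolding parameter enters only through lower-order perturbations, so they do not apply verbatim here. The paper handles this explicitly: citing \cite[Remark 3.7]{iooss} (the domain of $\L$ is unchanged so long as $\nu>0$), it applies the parameter-free theorem to the extended system $\widetilde\L\widetilde u = \bigl(\L + \mu_1\partial_{x_2}(u_1,0)^T - \mu_2^2\nabla^2 u, 0\bigr)$, which works because the dependence is linear in $\mu_2^2$ (also the reason the paper uses $\mu_2^2$ rather than $\mu_2$ as the extended coordinate). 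This is a necessary step your argument skips. Second, you explicitly acknowledge but do not actually carry out the identification of $a_1, a_2, a_3$ and the verification that the pure $\mu_2^2$-contribution can be absorbed into $\mathcal R$. The paper does this concretely: it writes $A = \mu_1 a - \mu_2^2\langle\nabla^2\zeta,\zeta^*\rangle_2 = \mu_1 a + \calO(\mu_2^2)$, observes via \eqref{e:ac}--\eqref{e:cdD} that $\Re(a)\propto(c_2-c_1)\propto\partial_{\Delta T}\d(\Delta T,1)$, which vanishes at the double root, expands the critical $\Delta T$ and hence $a$ in $\mu_2$, and reads off the factored form. Your alternative route via Taylor-expanding $\lambda_{k_c}^+$ directly in $(\Delta T,\nu)$ is equivalent in spirit (and your $a_0=\pi/L_2$ does agree with the paper's $\Im(a)$ at the degeneracy, using $c_1=c_2=c_3$ there), but is left as a claim rather than a computation, so the statement about the $a_j$ remains unproved.
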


The following Theorem shows, that the bifurcation results \ref{theorem2}-\ref{theorem3} can be generalized to instabilities caused by higher spatial harmonics.

\begin{theorem}
Assume \eqref{eq5} holds with critical wavenumber $k_2$, so that $\lambda_{(1,k_2)}^+ = \rmi\omega$. If $\DT_1(k_2^2)<\DT_2(k_2^2)$ then the statements of Theorem~\ref{theorem2} and Corollary~\ref{c:hopf} hold with $\DT_j$ replaced by $\DT_j(k_2^2)$, and $L_2$ replaced by $L_2/k_2$ for the coefficients on the center manifold. If $\DT_1(k_2^2)=\DT_2(k_2^2)$ then the statement of Theorem~\ref{theorem3} holds for the same modifications.
\end{theorem}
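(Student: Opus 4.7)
The plan is to reduce the general-$k_2$ case to the already-proven $k_2=1$ case by a scaling/symmetry argument, rather than redoing the entire center-manifold computation.

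First I would exploit translation invariance in $x_2$. Observe that \eqref{eq2} commutes with translations $x_2 \mapsto x_2 + s$ for any $s\in\R$, and in particular the closed subspace
\[
Z_{k_2}^2 := \{ u \in Z\times Z : u(x_1,x_2+L_2/k_2,t) = u(x_1,x_2,t)\}
\]
of functions with enhanced period $L_2/k_2$ in $x_2$ is invariant under the flow of \eqref{eq3}. Indeed, all derivatives, the Poisson solve defining $A$, and the bilinear nonlinearity $R$ respect this sub-periodicity, since Fourier modes $g_{(k_1,j)}$ with $j\in k_2\Z$ are closed under products and under the action of $\L$. On $Z_{k_2}^2$ the system \eqref{eq3} is naturally identified with the same system posed on the reduced cylinder $[0,L_1]\times\R/(L_2/k_2)\Z$, i.e.\ it takes the identical form but with the parameter $L_2$ replaced by $\widetilde L_2 := L_2/k_2$. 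Under this identification the critical mode $g_{(1,k_2)}$ of the original system is precisely the $(1,1)$-mode of the rescaled system, and $M_{(1,k_2)}$ in the original variables equals $M_{(1,1)}$ in the rescaled ones, so the thresholds $\DT_j(k_2^2)$ agree with the thresholds $\DT_j$ for the rescaled system.

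Next I would apply Theorem~\ref{theorem2} (respectively Theorem~\ref{theorem3} in the degenerate case) to the rescaled system on $Z_{k_2}^2$. This immediately yields a 2D locally invariant center manifold inside $Z_{k_2}^2$ with the reduced vector field \eqref{eq9} (resp.\ \eqref{eq9deg}) in which $L_2$ is replaced by $\widetilde L_2 = L_2/k_2$ throughout the constants $c_1,c_2,c_3$ and hence throughout $a$, $b$, $\omega$. To upgrade this to a center manifold in the ambient space $Z^2$ together with local exponential attractivity there, I would invoke Hypothesis~\ref{hypoth1} (now stated with critical wavenumber $k_2$): it gives a uniform spectral gap $\gamma>0$ between the critical pair and the remainder of $\sigma(\L)$ acting on all of $Z^2$, not just on $Z_{k_2}^2$. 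Standard center-manifold theory for sectorial operators (already used for Theorems~\ref{theorem2} and~\ref{theorem3}) then produces a locally exponentially attracting 2D center manifold $W_c \subset Z^2$. By uniqueness of the local center manifold within $Z_{k_2}^2$ and the flow-invariance of $Z_{k_2}^2$, $W_c$ coincides with the center manifold of the rescaled system, so the reduced normal form is the one obtained in the previous step.

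Finally, the supercritical Andronov-Hopf conclusion of Corollary~\ref{c:hopf} and the arc of stable periodic orbits in the degenerate case transfer verbatim, since they depend only on the signs of $\Re(a)$ and $b$ and on the transversality of the eigenvalue crossing, all of which are preserved under the substitution $L_2\to L_2/k_2$ (the expression for $b$ in \eqref{eq_norm_form} is manifestly negative for any positive value of $L_2/k_2$, and the sign of $\Re(a)$ at $\DT_j(k_2^2)$ is determined by the same root-order argument as in Corollary~\ref{c:hopf}). The main obstacle to anticipate is precisely the last point in the previous paragraph: one must check that the center manifold obtained from the rescaled system is indeed the full center manifold of \eqref{eq3} in $Z^2$, so that attractivity is genuine in the larger space. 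This follows from Hypothesis~\ref{hypoth1} together with uniqueness of local invariant manifolds tangent to $E_c$, but deserves to be stated carefully, because without the full spectral gap hypothesis one would only obtain attractivity within $Z_{k_2}^2$.
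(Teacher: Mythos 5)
Your proposal is correct and takes essentially the same approach as the paper: both reduce to the $k_2=1$ case by observing that the critical mode of period $L_2/k_2$ becomes the fundamental mode on the reduced cylinder, so that Theorems~\ref{theorem2},~\ref{theorem3} and Corollary~\ref{c:hopf} apply there with $L_2$ replaced by $L_2/k_2$, and the branches embed back into the original domain. The only (mild) difference is that you organize the argument around the flow-invariant sub-periodic subspace and a uniqueness argument to identify the two center manifolds, whereas the paper first invokes the center manifold theorem in the full space under condition \eqref{eq5} and then computes the coefficients on the reduced domain; these are equivalent, and your formulation makes the final embedding step in the paper slightly more explicit.
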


\begin{proof}
Under condition \eqref{eq5} the center manifold theorem applies as in the first parts of the proofs of Theorems~\ref{theorem2} and~\ref{theorem3}. This yields a stable locally invariant manifold with reduced dynamics of Hopf normal form. The only remaining question is the sign of the coefficients. 

If $k_2$ is the critical wavenumber in $x_2$-direction on the domain $[0,L_1]\times[0,L_2]$ then $1$ is this wavenumber on the domain $[0,L_1]\times[0,L_2/k_2]$ so that Hypothesis~\ref{hypoth1} holds there. Hence, on this domain and with the modifications in the claim, Theorem~\ref{theorem2}, Corollary~\ref{c:hopf} and Theorem~\ref{theorem3} hold fully.  

The theorem now follows since the bifurcating branches imbed into the original domain.
\end{proof}

\begin{remark}
Recall that there is a sequence of secondary Andronov-Hopf instabilities as noted in Remark~\ref{r:sndHopf}. Whenever these occur with a simple pair of complex conjugate eigenvalues, analogous center manifold reduction results hold for an unstable 2D manifold. The reduced vector fields are of the same form with coefficients given analogous to the above results, but to be computed at different $k_2$ and other parameters.
\end{remark}

\medskip

We start with the proof of Theorem \ref{theorem2}.
\begin{proof}[Theorem~\ref{theorem2}]
For the unfolding with $\mu_1$, we modify the definition of $R$ in \eqref{eq3} by adding the term $\mu_1 \partial_{x_2} u_1$ in the first component and denote the result by $R(u;\mu_1)$. 
For the resulting bifurcation problem, we verify the hypotheses of the center manifold theorem \cite[Theorem 3.3, p.46]{iooss}. 

As noted after \eqref{eq3}, $\L\in \mathcal{L}(Z^2,X^2)$ is sectorial so that Hypothesis 2.7 in that theorem holds, using\cite[Remark 2.18 p.\ 37]{iooss}. Hypotheses 3.1(i) and 2.4 hold on account of Theorem~\ref{theorem1}.
It remains to show Hypotheses 3.1(ii): smoothness of $R$. From \eqref{eq3} we explicitly compute
\begin{equation*}
\begin{aligned}
DR(u;\mu_1) v &= \begin{pmatrix}
\mu_1 \partial_{x_2} v_1 - A^{\perp}(v_1+v_2) \cdot \nabla u_1 - A^{\perp}(u_1 + u_2) \cdot \nabla v_1 \\
-A^{\perp}(v_1+v_2) \cdot \nabla u_2 - A^{\perp}(u_1 + u_2) \cdot \nabla v_2
\end{pmatrix},\\
D^2R(u;\mu_1)[v,w] &= - \begin{pmatrix}
A^{\perp}(v_1 + v_2) \cdot \nabla w_1 + A^{\perp}(w_1 + w_2) \cdot \nabla v_1 \\
A^{\perp}(v_1 + v_2) \cdot \nabla w_2 + A^{\perp}(w_1 + w_2) \cdot \nabla v_2
\end{pmatrix}.
\end{aligned}
\end{equation*}
Note that $A^{\perp}(v_1+v_2) \in Z^2$ and $\nabla w \in Y^2$. Since $\Hspace^2$ is a Banach algebra (see for instance \cite[Theorem (4.39]{Sobolev}), there is a constant $C_0>0$, such that
\begin{equation*}
\Vert A^{\perp}(v_1+v_2) \cdot \nabla w \Vert_Y \leq C_0 \Vert A^{\perp}(v_1+v_2) \Vert_{Y^2} \Vert \nabla w \Vert_{Y^2}.
\end{equation*}
Hence $\Vert D^2 R(u;\mu_1)[v,w] \Vert_{Y^2} \leq C \Vert v \Vert_{Z^2} \Vert w \Vert_{Z^2}$, that is, $R(u; \mu_1) \in C^2(Z^2,Y^2)$.
Moreover all the higher derivatives are identically $0$, hence $R$ is analytic.
This establishes the existence of the 2D center manifold and smoothness of $\psi$ as needed below, and for which the reduced dynamics has the normal form \eqref{eq9}. Here the critical frequency is $\omega = \pi\DT/L_2$ due to Theorem~\ref{theorem1}. 
In order to analyze the coefficients of the reduced equation, we write functions in the central subspace as
\begin{equation*}
u_0(t) = z(t) \zeta + \overline{z(t) \zeta},\mbox{ }z(t) \in \C.
\end{equation*}
Using the expressions in  \cite[p.\ 125]{iooss} (see also Scholarpedia on Andronov-Hopf bifurcation), we have
\begin{align}
a &= \langle R_{11}(\zeta)+2R_{20}(\zeta,\psi_{001}), \zeta^\ast \rangle_2\label{e:eqa}, \\
b &= \langle 2R_{20}(\zeta,\psi_{110})+ 2R_{20}(\bar\zeta,\psi_{200})+3R_{30}(\zeta,\zeta,\bar\zeta), \zeta^\ast \rangle_2.\label{e:eqb}
\end{align}
The quantities in these expressions are defined as follows: $\zeta^\ast$ is the adjoint eigenvector to $\zeta$, the operators $R_{ik}$ are given by, see \cite[p.\ 95-96]{iooss}, 
\begin{equation}
\begin{aligned}
R_{01} &:= \partial_{\mu_1} R(0;0) = 0,\\
R_{20}[v,w]&:=\frac{1}{2}D^2R(0;0)[v,w]\\ 
&= -\frac{1}{2}\begin{pmatrix}
A^{\perp}(v_1 + v_2) \cdot \nabla w_1 + A^{\perp}(w_1 + w_2) \cdot \nabla v_1 \\
A^{\perp}(v_1 + v_2) \cdot \nabla w_2 + A^{\perp}(w_1 + w_2) \cdot \nabla v_2 \\
\end{pmatrix},\\
R_{11} v &:= \partial_{\mu_1} DR(0;0)v = \begin{pmatrix}
\partial_{x_2} v_1 & \nabla^2 v_1\\
0 & \nabla^2 v_2
\end{pmatrix},\\
R_{30} & = \frac{1}{3!} D^3 R = 0, 
\end{aligned}
\end{equation}
and the functions $\psi_{ijk}$, from the expansion of $\psi$, are the unique solutions to
\begin{equation}\label{e:psis}
\begin{aligned}
-\L \psi_{001} &= R_{01},\\
(2\omega\rmi-\L)\psi_{200} &= R_{20}(\zeta,\zeta),\\
-\L\psi_{110} &= 2R_{20}(\zeta,\bar \zeta).
\end{aligned}
\end{equation}

\medskip\paragraph{Computation of a} 
Since $R_{01}=0$ and $\ker(\L) = \{0\}$, $-\L \psi_{001} = R_{01}$ implies $\psi_{001} = 0$. For this result the parameter $\mu_2$ is held fixed at zero so that, using \eqref{e:eqa}, the coefficient $a$ of the reduced system \eqref{eq9} is 
\begin{equation}\label{e:eqa2}
a = \langle R_{11}(\zeta), \zeta^{\ast} \rangle_2 = \frac{2 \pi \xi^{1} \rmi}{L_2}\langle (g_{1,1},0)^T, \zeta^{\ast}\rangle_{2},
\end{equation}
where $\zeta^{\ast}$ is the adjoint eigenfunction, satisfying
\begin{equation}\label{eq11}
\L^{\ast} \zeta^{\ast} = -\rmi \omega \zeta^{\ast}\mbox{ , }\langle \zeta, \zeta^{\ast} \rangle_{2} = 1.
\end{equation}
with the adjoint operator of $\L$ given by (using integration by parts)
\begin{equation*}
\begin{aligned}
\L^{\ast}v &= \begin{pmatrix}
-\DT \partial_{x_2} \vP + \frac{1}{L_1} B v + \nu \nabla^2 \vP \\
\hspace{20mm} \frac{1}{L_1} B v + \nu \nabla^2 \vM 
\end{pmatrix}
\mbox{ , } v \in Y_2\\
Bv(x) &= \frac{4 \rmi}{L_2 \pi} \sum_{k \in \N \times \Z} \frac{k_2}{\frac{L_2}{L_1} k_1^2 + \frac{4 L_1}{L_2}k_2^2}
\langle \vP - \vM, g_k \rangle  g_k(x).
\end{aligned}
\end{equation*}
The critical adjoint eigenfunction $\zeta^{\ast}$, as any eigenfunction of $\L^{\ast}$, has the form $\zeta^{\ast}(x) = \eta g_m(x)$, where $\eta = (\eta^1, \eta^2)\in\C^2$ is an eigenvector of $M^\ast_m$ derived from \eqref{eq16}. If $m \neq (1,1)$, then $\langle \zeta, \zeta^{\ast} \rangle_{2} = 0$, therefore $m = (1,1)$, and hence $M^{\ast}_{1,1} \eta = -\rmi \omega \eta$ so that from $\langle g_{11}, g_{11}\rangle = L_1L_2/2$ and \eqref{e:eqa2} we infer
\begin{equation}\label{e:eqa3}
a = \pi \xi^{1} \overline{\eta^1} L_1 i.
\end{equation}
Due to \eqref{eq13}, there is $\xi\in\C^2$ such that
\begin{equation}\label{eq10}
M_{1,1} \xi = \rmi \omega \xi, \xi = (\xi^1, \xi^2)^T,
\end{equation}
and using \eqref{eq_const} at the bifurcation points $\Delta T= \Delta T_j$, $j=1,2$, we have
\begin{equation}\label{e:c-rel}
c_3^2 = c_1(2c_2-c_1).
\end{equation}
Together with equation \eqref{eq11} we readily check that
\begin{equation}\label{e:cmats}
\begin{aligned}
(M_{1,1}-\rmi \omega)\xi = &\begin{pmatrix}
c_1\rmi - c_2\rmi - c_3 & -c_2\rmi \\
c_2\rmi & -c_1\rmi + c_2\rmi - c_3
\end{pmatrix}
\xi = 
0\\
(M_{1,1}^\ast+\rmi\omega) \eta= &\begin{pmatrix}
-c_1\rmi + c_2\rmi - c_3 & -c_2\rmi\\
c_2\rmi & c_1\rmi - c_2\rmi - c_3
\end{pmatrix}
\eta = 
0\\
\xi \cdot & \overline{\eta} = \frac{2}{L_1 L_2}.
\end{aligned}
\end{equation}
Due to \eqref{e:c-rel}, the eigenvectors can be chosen as
\begin{equation}\label{ckernel}
\begin{aligned}
\xi = \begin{pmatrix}
c_2\rmi \\ c_1\rmi -c_2\rmi -c_3
\end{pmatrix}\mbox{, }
\overline{\eta} = \delta \begin{pmatrix}
-c_2\rmi\\
c_1\rmi - c_2\rmi - c_3
\end{pmatrix} \\
\delta = \frac{2}{L_1 L_2} \frac{1}{(c_1\rmi-c_3)(c_1\rmi-2c_2\rmi-c_3)},
\end{aligned}
\end{equation}
where $\delta \neq 0$ provides the normalization. Therefore, \eqref{e:eqa3} yields $a = \pi c_2^2 L_1 \delta \rmi$ as claimed.

\medskip\paragraph{Computation of b} 
We first show $\psi_{200} = 0$; recall \eqref{e:psis}. Thanks to \eqref{eq10}, $\zeta(x) = \xi g_{1,1}(x)$ and for  $k \in \N\times \Z$ we have
\begin{equation}\label{e:A1gk}
A_1 g_k(x) = - \frac{L_2}{\pi}\frac{k_1}{\frac{L_2}{L_1}k_1^2 + 4 \frac{L_1}{L_2} k_2^2}\phi_k(x). 
\end{equation}
A direct calculation yields $R_{20}(\zeta, \zeta) = 0$. Since $\ker(2 \rmi \omega - \L) = \{0\}$ on account of Theorem~\ref{theorem1}, the equation for $\psi_{200}$ from \eqref{e:psis} implies $\psi_{200} = 0$.
Together with $R_{30}=0$ and \eqref{e:eqb}, this means
\begin{equation}\label{e:eqb2}
b = \langle 2R_{20}(\zeta,\psi_{110}), \zeta^\ast \rangle_2.
\end{equation}
Next, we compute $\psi_{110}$ using \eqref{e:psis}. From $\zeta = \xi g_{11}$ and \eqref{defA}, \eqref{e:A2gk}, \eqref{e:Agk} as well as \eqref{e:A1gk}, straightforward calculations give
\begin{equation*}
-\L \psi_{110} = 2 R_{20}(\zeta,\overline{\zeta}) = \frac{2 i}{\frac{L_2}{L_1} + 4 \frac{L_1}{L_2}}\overline{\xi} g_{2,0}.
\end{equation*}
Since the eigenvectors $(g_k)_{k \in \N \times \Z}$ of $\L$ are mutually orthogonal and $M_{2,0}$ is a multiple of the identity, we have that $\psi_{110} = \alpha \overline{\xi} g_{2,0}$, where 
\begin{equation*}
\alpha = \frac{L_1^2 \rmi}{2 \pi^2 \nu}\frac{\xi^1 + \xi^2}{\frac{L_2}{L_1} + \frac{4 L_1}{L_2}}.
\end{equation*}
It follows, after straightforward calculations, that $R_{20}(\zeta, \psi_{110}) = \beta g_{1,1} \phi_{2,0}$, where
\begin{equation*}
\beta = \alpha \Bigg( \frac{2 \rmi (\xi^1 + \xi^2)}{\frac{L_2}{L_1} + \frac{4 L_1}{L_2}}\overline{\xi} - \rmi \frac{L_1 (\overline{\xi^1} + \overline{\xi^2})}{2 L_2} \Bigg).
\end{equation*}
Substitution into \eqref{e:eqb2} yields
\begin{equation*}
b = \langle 2 R_{20}(\zeta, \psi_{110}), \zeta^{\ast} \rangle_{2} = 2 \beta \cdot \overline{\eta} \langle g_{1,1} \phi_{2,0}, g_{1,1} \rangle =
 - \frac{L_1 L_2}{2}\beta \cdot \overline{\eta}.
\end{equation*}
Finally, we use that $\overline{\xi} \cdot \overline{\eta} = 0$, see \eqref{ckernel}, and together with $\xi \cdot \overline{\eta} = \frac{2}{L_1 L_2}$ we obtain
\begin{equation*}
b = -\frac{L_1^3}{4 \pi^2 \nu}\frac{c_1^2 + c_3^2}{\frac{L_2^2}{L_1}+4 L_1},
\end{equation*}
which concludes the proof. \end{proof}

\medskip

We now turn to the proof of Corollary~\ref{c:hopf}.

\begin{proof}[Corollary~\ref{c:hopf}]
From \eqref{eq_norm_form} and \eqref{eq_const} we readily check $b<0$. 

Writing \eqref{eq_norm_form} in terms of $c_j$ and using \eqref{ckernel}, a straightforward calculation gives \begin{align}\label{e:ac}
\Re(a) = \frac{4 \pi}{L_2}\frac{c_2^2  c_3}{|(c_1 - c_3\rmi) (c_1\rmi - 2 c_2\rmi -c_3)|} (c_2 -c_1).
\end{align}
Thanks to $c_j> 0$, $j=1,2,3$, all factors in this expression are positive, except possibly the last one, and therefore the sign of $\Re(a)$ is the sign of $c_2-c_1$. Note that 
\begin{equation}\label{e:cdD}
\frac{2\pi}{L_2}(c_2-c_1) = \partial_{\Delta T} d(\Delta T, 1)/L_1^4,
\end{equation}
and that the quadratic polynomial $d(\cdot, 1)$  has negative quadratic coefficient. Therefore, $c_1 < c_2$ at $\Delta T= \Delta T_1$ and so $\Re(a) > 0$, while at $\Delta T= \Delta T_2$ we have $c_2 < c_1$, hence $\Re(a) < 0$. We readily compute that $\Im(a) = c_3^2\Re(a) /(c_3(c_2-c_1)) = \calO(\nu^2)$. 

In conclusion, there are generic supercritical Andronov-Hopf bifurcations at both endpoints of the instability region. As usual, the local invariance of the center manifold from Theorem~\ref{theorem2} implies the existence of the claimed heteroclinic orbit between the unstable steady-state and the stable limit cycle, contained in the center manifold.

\medskip
Now consider the behaviour of $a$ and $b$ for small viscosity $0<\nu \ll 1$.  
With $c_4 := \frac{c_3}{\nu}$ we get $c_2, c_4 = \calO(1)$, and 
\begin{equation}
\begin{aligned}
a &= \frac{2 \pi c_2^2}{L_2(c_1 + \nu c_4 \rmi)((c_1 - 2 c_2 )\rmi-\nu c_4)},\\
b &= - \frac{L_1^3}{4 \pi^2 \nu}\frac{c_1^2 + \nu^2 c_4^2}{\frac{L_2^2}{L_1}+4 L_1}.
\end{aligned}
\end{equation}

\paragraph{Left endpoint of the instability region: $\Delta T_1$}
Inspecting the formula for $d_\DT$ we find $c_1 = c_2 - \sqrt{c_2^2 - \nu^2 c_4^2}$, where $c_2^2 - \nu^2 c_4^2 > 0$ by \eqref{inst_condred}. Hence,
\begin{equation*}
c_1 = \frac{c_4^2}{2 c_2^2} \nu^2 + \mathcal{O}(\nu^4),
\end{equation*}
and we obtain
\begin{equation*}
a = \frac{\pi c_2}{L_2 c_4}\frac{1}{\nu} +  \mathcal{O}(\nu), \quad
b = - \frac{L_1^4 c_4^2}{4 \pi^2 ( 4 L_1^2 + L_2^2 )} \nu + \mathcal{O}(\nu^3).
\end{equation*}
Therefore the radius of the stable limit cycle $|z(t)|$ for sufficiently small $\mu_1$ is
\begin{equation*}
\frac{2 \pi}{L_1^2 c_4} \Bigg(\frac{\pi (4 L_1^2 + L_2^2) c_2}{L_2 c_4}\Bigg)^\frac{1}{2}
\frac{1}{\nu}\mu_1^\frac{1}{2} + \mathcal{O}(\nu^\frac{1}{2}).
\end{equation*}

\paragraph{Right endpoint of the instability region: $\Delta T_2$} Here $c_1 = c_2 + \sqrt{c_2^2 - \nu^2c_4^2}$, therefore
$c_1 = 2 c_2 + \mathcal{O}(\nu^2)$ and so
\begin{equation*}
a = -\frac{c_2^2 \pi}{c_4^2 L_2}\frac{1}{\nu} +  \mathcal{O}(\nu), \quad
b = \frac{L_1^4 c_2^2}{\pi^2 (4 L_1^2 + L_2^2)}\frac{1}{\nu} + \mathcal{O}(\nu) 
\end{equation*}
hence the radius of the stable limit cycle for small $-\mu_1$ is
\begin{equation*}
\frac{\pi}{L_1^2 c_4} \Bigg(\frac{4 L_1^2 + L_2^2}{L_2}\Bigg)^\frac{1}{2}(-\mu_1)^\frac{1}{2} +\mathcal{O}(\nu^\frac{1}{2}).
\end{equation*}
This concludes the proof.
\end{proof}

\medskip
We finally provide the proof of Theorem \ref{theorem3}.
\begin{proof}[Theorem~\ref{theorem3}]
In order to unfold in $\mu_2$, we cannot cite a center manifold theorem from \cite{iooss} verbatim. The reason is that $\mu_2$ modifies the second order derivative terms, but the results in \cite{iooss} are formulated only for parameter dependence of lower order terms. However, as pointed out in \cite[Remark 3.7]{iooss}, there is no problem, if the domain of $\L$ is independent of the parameter. This is the case here as long as $\nu = \nu_\crit(1) - \mu_2^2>0$, which is valid for the purpose of unfolding from $\nu=\nu_\crit$. More precisely, the proof of \cite[Theorem 3.3, p. 46]{iooss}, which considers the phase space extended by the unfolding parameter space, applies as follows for $\nu_\crit(1)>\mu_2^2$ due to the linearity in $\mu_2^2$. Set $\mu=(\mu_1,\mu_2^2)$, $\widetilde u=(u,\mu)$ and $\widetilde\L \widetilde u=(\L+\mu_1\partial_{x_2}(u_1,0)^T -\mu_2^2\nabla^2 u,0)$ as well as $\widetilde R(\widetilde u) = (R(u),0)$. (We use $\mu_2^2$ as the parameter instead of $\mu_2$ only for more pleasant reduced equations.) For the extended problem, the parameter-free center manifold theorem applies \cite[Theorem 2.9]{iooss}.

Therefore, as in the first part of the proof of Theorem~\ref{theorem1}, we obtain existence of the center manifold and the coefficient $b$ is unchanged. Let $A$ denote the real coefficient of $z$ in the vector field on the center manifold. It remains to derive the claimed $a_j$-dependent form 
\[
A=a_1\mu_1\left(a_2\mu_2 - a_3\mu_1\right) + \calO(\mu_2^2 + |\mu_1\mu_2^3|).
\]
For this we simply note that in the present case, \eqref{e:eqa2} is replaced by the more general form
\[
A = \langle R_{11}(\zeta)\mu, \zeta^{\ast} \rangle_2 = \mu_1 a - 
\mu_2^2 \langle \nabla^2 \zeta, \zeta^\ast \rangle_{2},
\]
where $\zeta = \xi g_{1,1}$. Using $\nabla^2 g_{1,1} = -\pi^2\left( \frac{1}{L_1^2} + \frac{4}{L_2^2} \right) g_{1,1}$ as well as $\langle \zeta, \zeta^\ast \rangle_{2}=1$, we obtain
\begin{equation}\label{e:A}
A = \mu_1 a + \mu_2^2 \pi^2\left( \frac{1}{L_1^2} + \frac{4}{L_2^2} \right) = \mu_1 a + \calO(\mu_2^2),
\end{equation}
with $a$ from Theorem~\ref{theorem2}, whose dependence on $\mu_2$ is considered next.
Recall that $\nu = \nu_\mathrm{crit}-\mu_2^2$, with $\mu_2=0$ giving equality in \eqref{inst_condred}. Hence,
\[
\nu_\mathrm{crit} = \frac{2}{\pi L_1L_2 \tilde c_3^2},
\]
where $\tilde c_3$ stems from writing 
\[
d(\Delta T, 1)/L_1^4 = (2\tilde c_1 - \tilde c_2 \Delta T)\Delta T - \nu^2 \tilde c_3^2,
\]
with suitably defined $\tilde c_j$, $j=1,2,3$ (note the relation to $c_j$ in \eqref{eq_const}). Then $d(\Delta T, 1)=0$ gives
\[
\Delta T_\mathrm{crit} = \tilde c_1 + \frac{\tilde c_3}{2}\mu_2\sqrt{2\nu_\mathrm{crit}-\mu_2^2}.
\]
Using \eqref{e:ac}, \eqref{e:cdD} with $\Delta T = \Delta T_\mathrm{crit} + \mu_1$ then yields
\[
a = a_1\left(\tilde c_3\mu_2\sqrt{2\nu_\mathrm{crit} - \mu_2^2} - 2 \tilde c_2 \mu_1\right).
\]
The above formula for $\nu_\mathrm{crit}$ and expansion in $\mu_2=0$ gives claimed form of $A$, when substituting the resulting $a$ into \eqref{e:A}.

The bifurcation scenario can be immediately read off the reduced vector field.
\end{proof}

\section{Travelling wave bifurcation}\label{s:tw}

As mentioned in the introduction, due to the translation symmetry in $x_2$, the Andronov-Hopf bifurcations correspond to  periodic travelling wave bifurcations. Specifically, each periodic orbit is a steady state in a comoving frame $y_2=x_2- st$ for certain $s$. While this is somewhat folklore, for completeness we give some details. The converse is clear: periodic travelling wave bifurcations imply Andronov-Hopf bifurcations. 

First note that the effect of the co-moving variable is the introduction of an advection term $s\partial_{y_2}$ on the right hand side of the first two equations in \eqref{eq1}. Therefore, the linearization $M_k$ is replaced by
\[
M_{k,s} = M_k + sC_1(k)\mathrm{Id},
\]
where $C_1(k) = 2\pi \rmi k_2/L_2$. Hence, if $\lambda_k$ is an eigenvalue of $M_k$ then $\lambda_k+sC_1(k)$ is an eigenvalue of $M_{k,s}$ and choosing critical $k_2=+1$, the frequency at bifurcation $\omega$ is replaced by $\omega + s2\pi/L_2$. The reduced equation on the center manifold then reads
\[
\dot z = \rmi(\omega + s2\pi/L_2) + \mu_1 a + b z|z|^2,
\]
where $a$ and $b$ are unmodified since the matrices made of $c_j$ in \eqref{e:cmats} \emph{do not} depend on $s$.
Hence, for $s=s_*:=-\omega L_2/2\pi$ we find steady state supercritical pitchfork bifurcations. Note the choice $k_2=-1$ reverses the sign of $\omega$, simply leading to the complex conjugate equation. 

This argument is slightly incomplete since the spectrum of the modified $\L$ possesses a double zero eigenvalue at $s=s_*$. Hence, the coefficients on the center manifold are not immediately given by the Andronov-Hopf case used above. However, the reduced vector field on the 2D center manifold of the double zero eigenvalue reduces to a scalar equation, undergoing a pitchfork bifurcation, precisely due to the translation symmetry. In polar coordinates of the Hopf normal form, this is due to detuning the trivial angular equation, co-rotation with velocity $sC_1(1)$, to stationarity. Such reductions due to continuous symmetry also hold in more abstract contexts, see, e.g., Theorem 2.18 of \cite{iooss}, where an additional reflection symmetry is assumed.

\medskip
In the context of travelling waves, let us briefly take the perspective of pattern formation, for which the infinite strip $x\in[0,L_1]\times\R$ is the natural domain here. The linear stability analysis of the laminar in this case involves the eigenvalues $\lambda_k^\pm$ from \S\ref{s:spec} with continuous and rescaled $k_2$: these are eigenmodes in the essential spectrum given by $\lambda_k^\pm$ with $k=(k_1, L_2 k_2)$, $(k_1,k_2)\in \N\times \R$. In particular, the critical modes can only be $\lambda_{(1,L_2 k_2)}^+$, $k_2\in \R$. 

We are then lead to search for pattern-forming instabilities, and indeed, the system easily allows for the analogue of Turing-Hopf instabilities from reaction-diffusion systems, which is also well known in fluid dynamics, for instance Rayleigh-Benard convection. A detailed analysis is tedious, and we only give a numerical example in Figure~\ref{f:tur}, which is derived from that in Figure~\ref{f:counter}(a). Here the critical modes at onset of the instability on the infinite strip have wavenumber near $0.75$. The periodic solutions of \S\ref{s:center}, alias, wavetrains, are a signature of the bifurcating continuum of periodic solutions. The fact that these are supercritical suggests supercritical Turing-Hopf bifurcations.

\begin{figure}[H]
\begin{center}\includegraphics[scale = 0.55]{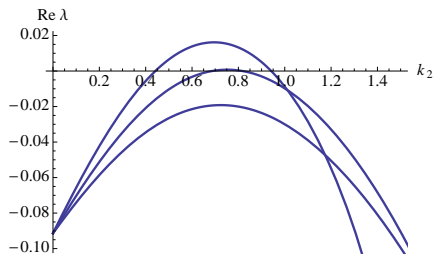}
\caption{\label{f:tur}A Turing-Hopf instability for parameters as in Figure~\ref{f:counter}(a): real parts of eigenvalues $\lambda_k^+$, $k=(1,k_2)$, as functions of $k_2$ for $\DT=0.05$ (stable), $\DT=0.08$ (near bifurcation) and $\DT=0.2$ (unstable).}
\end{center}
\end{figure}

\section{Nonlinear Instability}\label{s:nl}
In this short paragraph, we give some details on the fact that the linear instability of the laminar state $\rho_{ss}$ is indeed an instability for the nonlinear equation uniformly in $\nu$. Roughly speaking, this means that in $ (\Delta T_1, \Delta T_2)$, there are initial data which are arbitrarily close to the steady state and which get ``far'' from it exponentially quickly. We thus assume $\DT_1<\DT_2$ and take $\DT\in  (\DT_1, \DT_2)$. This means that $\sigma_+ (\L) \neq \emptyset$ (see the proof of Theorem \ref{theorem1}).

\medskip

In the parabolic formulation \eqref{eq3}, the sectoriality of $\L$ allows to apply the well-known nonlinear instability results from \cite{Henry} for spectrum in the right half plane. However, this heavily relies on $\nu>0$ and the following does not. Furthermore, the result given for the specific case here is actually stronger than the general ones in \cite{Henry}.

\medskip
As in \cite[Theorem 6.1]{daniel1}, the following instability result holds for $\nu\geq 0$:
\begin{theorem}
\label{insta}
Suppose $\Delta T \in (\Delta T_1, \Delta T_2)$. There exist constants $\delta_0,\eta_1,\eta_2>0$ such that for any $0<\delta<\delta_0$ and any $s\geq0$ there exists a solution $(\rho^\pm,E)$ to \eqref{eq1} with $\Vert \rho(0)- \rho_{ss} \Vert_{H^s} \leq \delta$ but such that:
\begin{equation*}
\Vert \rho(t_\delta)- \rho_{ss} \Vert_{\Lspace^2} \geq \eta_1
\mbox{  and  } \Vert E(t_\delta) \Vert_{\Lspace^2} \geq \eta_2,
\end{equation*}
with $t_\delta= O(\vert \log \delta \vert)$.
\end{theorem}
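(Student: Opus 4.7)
The plan is a standard Grenier-type bootstrap passing from linear to nonlinear instability, in the spirit of \cite{daniel1}: start from initial data supported (essentially) on the unstable linear mode, control the nonlinear correction by energy/Duhamel estimates, and stop the argument at a time $t_\delta$ of order $|\log\delta|$ at which the linear growth reaches size $O(1)$ while the correction is still negligible. Because $\Delta T \in (\Delta T_1,\Delta T_2)$, Lemma~\ref{l:spec} (or equivalently the sign analysis of $d(\Delta T, 1)$) produces an eigenvalue $\lambda_+\in\sigma_+(\L)$ with $\sigma := \Re\lambda_+>0$ and associated eigenfunction $\zeta = \xi\, g_{1,1}$. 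I would choose initial data $u(0) = \delta\,\Re\zeta$ and set $u^L(t) := \delta\,\Re(e^{\lambda_+ t}\zeta)$, so that $\|u^L(t)\|_{\Lspace^2}\geq c_\zeta\,\delta\, e^{\sigma t}$ for some $c_\zeta>0$ depending only on $\zeta$.

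\textbf{Linear semigroup bound and Duhamel.} Fix $s\geq 3$, so $\Hspace^s$ is a Banach algebra and (since $A$, $A^\perp$ from \eqref{defA} gain one derivative) $\|R(v)\|_{\Hspace^s}\lesssim \|v\|_{\Hspace^s}^2$. The key auxiliary estimate is
\[
\|e^{t\L} f\|_{\Hspace^s} \leq C\, e^{\sigma t}\,\|f\|_{\Hspace^s}, \qquad t\geq 0.
\]
For $\nu>0$ this follows from sectoriality of $\L$ on $\Hspace^s$-based spaces, as discussed after \eqref{eq3}; for $\nu=0$ I would instead invoke the explicit block-diagonalization of $\L$ in the basis $\{g_k\}$: each 2D block is the matrix $M_k$ of \eqref{eq16}, whose eigenvalues are bounded above in real part uniformly in $k$ (by Lemma~\ref{l:bound} only finitely many $k$ give real parts larger than any given $\eta$, and the remaining blocks have purely imaginary eigenvalues), so $\|e^{tM_k}\|\leq C\,e^{\max(\Re\lambda_k^+,0)t}$ with $C$ independent of $k$. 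Writing $u = u^L + u^N$, the remainder satisfies $u^N(0)=0$ and $\partial_t u^N = \L u^N + R(u^L+u^N)$.

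\textbf{Bootstrap and conclusion.} I would introduce $T_\delta := \sup\{\,t\ge0\colon \|u^N(\tau)\|_{\Hspace^s}\leq \delta\,e^{\sigma\tau}\ \forall\,\tau\leq t\,\}$. On $[0,T_\delta]$ one has $\|u(t)\|_{\Hspace^s}\leq C_1\delta e^{\sigma t}$, hence by the quadratic bound on $R$ and Duhamel,
\[
\|u^N(t)\|_{\Hspace^s} \leq C\int_0^t e^{\sigma(t-\tau)}\,C_2\delta^2 e^{2\sigma\tau}\,d\tau \leq C_3\,\delta^2\,e^{2\sigma t},
\]
which strictly improves the bootstrap as long as $C_3\,\delta\,e^{\sigma t}<1/2$. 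Choosing $\eta>0$ small and setting $t_\delta := \sigma^{-1}\log(\eta/\delta) = O(|\log\delta|)$, this bound is valid up to $t_\delta$, and then $\|u^L(t_\delta)\|_{\Lspace^2}\geq c_\zeta\eta$ while $\|u^N(t_\delta)\|_{\Lspace^2}\leq C_3\eta^2$. Taking $\eta$ small, the triangle inequality gives $\|\rho(t_\delta)-\rho_{ss}\|_{\Lspace^2}\geq\eta_1$. For the claim on $E$, note that the unstable eigenvector $\xi$ from \eqref{ckernel} (or the appropriate real analog for $\lambda_+\in\mathbb{R}$) satisfies $\xi^1+\xi^2\neq 0$, so $u_1^L+u_2^L$ has a definite non-zero projection on $g_{1,1}$; solving $-\nabla^2 V = u_1+u_2$ with the Dirichlet boundary conditions then yields $\|E(t_\delta)\|_{\Lspace^2}\geq\eta_2$. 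The arbitrary initial-time shift $s\geq 0$ follows by autonomy of \eqref{eq1}, and the $\Hspace^s$-smallness of the initial data for any $s\geq 0$ from smoothness of $\zeta$.

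\textbf{Main obstacle.} The delicate point is the semigroup estimate in the hyperbolic case $\nu=0$, where sectoriality and the results of \cite{Henry} do not apply. One cannot appeal to a general spectral-mapping theorem; instead one must exploit the exact $2\times 2$ block structure provided by Lemma~\ref{l:spec}, verify that the constants $\|e^{tM_k}\|\,e^{-\Re\lambda_k^+ t}$ are uniformly bounded in $k$ (with at worst a mild polynomial factor at coincident eigenvalues, harmlessly absorbed by a slight loss in $\sigma$), and show this is compatible with summation in $\Hspace^s$. The only other subtlety is the nondegeneracy $\xi^1+\xi^2\neq 0$, which can be read off \eqref{ckernel} directly.
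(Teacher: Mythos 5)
Your overall plan --- perturbing along the most unstable linear mode $\zeta = \xi g_{1,1}$ and closing a Grenier-type bootstrap via Duhamel --- is exactly the method the paper invokes by reference to \cite{Gre} and \cite[Theorem~6.1]{daniel1}, so the strategy is the right one. However, the central quantitative claim in your bootstrap, $\|R(v)\|_{\Hspace^s}\lesssim\|v\|_{\Hspace^s}^2$, is false. Since $R(v)=-A^\perp(v_1+v_2)\cdot\nabla v$ and $A^\perp$ gains exactly one derivative, one has $A^\perp(v_1+v_2)\in\Hspace^{s+1}$ while $\nabla v\in\Hspace^{s-1}$; the product therefore lands only in $\Hspace^{s-1}$, i.e.\ $\|R(v)\|_{\Hspace^{s-1}}\lesssim\|v\|_{\Hspace^s}^2$. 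With one derivative lost, the single-norm Duhamel iteration
$\|u^N(t)\|_{\Hspace^s}\leq C\int_0^t e^{\sigma(t-\tau)}\|u(\tau)\|_{\Hspace^s}^2\,d\tau$
does not close. For $\nu>0$ the parabolic smoothing $\|e^{t\L}f\|_{\Hspace^s}\lesssim t^{-1/2}e^{\sigma t}\|f\|_{\Hspace^{s-1}}$ would absorb the lost derivative, but the theorem is stated for $\nu\geq 0$. In the hyperbolic case $\nu=0$ you must run the full two-tier Grenier argument: Duhamel in a fixed low-order norm (say $\Lspace^2$ or $\Hspace^1$) for the remainder $u^N$, paired with an independent a priori $\Hspace^s$-energy estimate that exploits the divergence structure $\nabla\cdot A^\perp(\cdot)=0$ (so the leading term $\int A^\perp(u_1+u_2)\cdot\nabla(D^\alpha u_j)\,D^\alpha u_j\,dx$ vanishes and only commutators remain). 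You allude to ``energy/Duhamel estimates'' at the outset, but the bootstrap as written rests entirely on the flawed quadratic $\Hspace^s$ bound, so this is a genuine gap in the execution rather than a mere omission of detail.

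Two smaller points. First, in the statement $s$ is the Sobolev index of the smallness hypothesis on the initial data, not a time shift, so the sentence invoking ``autonomy of \eqref{eq1}'' misreads the role of $s$; the correct observation (which you do also make) is simply that $\zeta$ is smooth, so $\|\delta\,\Re\zeta\|_{\Hspace^s}$ is $O(\delta)$ for each fixed $s$ and the same initial datum serves for every $s$. Second, by Lemma~\ref{l:spec} the unstable eigenvalue $\lambda_{(1,1)}^+$ has $\Im\lambda_{(1,1)}^+=\pi\Delta T/(\ell L_1)\neq 0$, so it is never real; the parenthetical ``for $\lambda_+\in\mathbb{R}$'' should be dropped, and the eigenvector you cite in \eqref{ckernel} is the one at the critical parameter where $\Re\lambda=0$, not at an interior $\Delta T$ --- the analogous computation at an interior $\Delta T$ gives $\xi^1+\xi^2=\xi^1(-C_3-\lambda)/(C_2-C_3-\lambda)\neq 0$ since $\Re(C_3+\lambda)>0$, so your conclusion on $E$ is correct once stated for the right eigenvector.
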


Since the proof is almost identical to that of Theorem 6.1 in \cite{daniel1}, we refer to this paper for details. 
The main idea is to apply the method of Grenier \cite{Gre}.

\section{Global Nonlinear Stability}\label{s:global}

Let us now investigate the stability of the steady state $\rho_{ss}$, outside of $[\DT_1, \DT_2]$. We first state the results and then give the proofs. 

The key point is the following energy identity:
\begin{lemma}
\label{lem:id}
For any initial data $\rho_0 \in L^{\infty}$, we have the following estimate for the solution of the system \eqref{eq1}
\begin{equation}
\label{e:energy}
 \begin{aligned}
\mathcal{E}(t) := \Vert \rho-\rho_{ss} \Vert_{\Lspace^2}^2 &-\frac{2}{L_1 \Delta T}\int_{\Omega}\vert \nabla V\vert ^2 dx \\
&+ 2\nu \int_0^t\left[ \frac{-2}{L_1 \Delta T} \| \rho^+ + \rho^- - 1\|_{\Lspace^2} ^2 +  \| \nabla (\rho-\rho_{ss})\|_{\Lspace^2}^2  \right] ds \\
&\quad \quad \leq \mathcal{E}(0) =\Vert \rho_0-\rho_{ss} \Vert_{\Lspace^2}^2 - \frac{2}{L_1 \Delta T}\int_{\Omega}\vert \nabla V_{|t=0}\vert ^2 dx,
 \end{aligned}
\end{equation}
with  $\Vert \rho-\rho_{ss} \Vert_{\Lspace^2}^2=\Vert \rho^+-\rho_{ss}^+ \Vert_{\Lspace^2}^2+\Vert \rho^--\rho_{ss}^- \Vert_{\Lspace^2}^2$, 
$\Vert \nabla (\rho-\rho_{ss}) \Vert_{\Lspace^2}^2=\Vert \nabla(\rho^+-\rho_{ss}^+) \Vert_{\Lspace^2}^2+\Vert \nabla(\rho^--\rho_{ss}^-) \Vert_{\Lspace^2}^2$
and $\nabla V = \nabla (\nabla^2)^{-1} (\rho^+ + \rho^- -1)$.
\end{lemma}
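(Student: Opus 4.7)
The plan is to derive an energy equality whose time-integrated form is exactly \eqref{e:energy}. The key observation is that two natural $\Lspace^2$-type identities—one for $\rho-\rho_{ss}$ and one for $\int_\Omega|\nabla V|^2$—each produce indefinite cross terms of the form $\int E_2 u^\pm$, and these can be made to cancel \emph{completely} when the two are combined with the specific weight $-\tfrac{2}{L_1\DT}$. I will work with the deviation $u^\pm:=\rho^\pm-\rho_{ss}^\pm$ directly in system \eqref{eq1} (without the co-moving change of variable), which using $\nabla \rho_{ss}^\pm = (\mp 1/L_1,0)^T$ becomes
\begin{align*}
\partial_t u^+ &= T^+\partial_{x_2}u^+ + E_2/L_1 - E^\perp\!\cdot\nabla u^+ + \nu\nabla^2 u^+,\\
\partial_t u^- &= T^-\partial_{x_2}u^- - E_2/L_1 - E^\perp\!\cdot\nabla u^- + \nu\nabla^2 u^-,
\end{align*}
with $-\nabla^2 V = u^+ + u^-$ and homogeneous Dirichlet conditions on $u^\pm$ and $V$ at $x_1\in\{0,L_1\}$.

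First I multiply each equation by $u^\pm$, integrate, and add. The transport terms $T^\pm\int u^\pm\partial_{x_2}u^\pm$ vanish by $x_2$-periodicity; the nonlinear advection $\int u^\pm E^\perp\!\cdot\nabla u^\pm$ vanishes because $\nabla\cdot E^\perp=0$ (from $E=-\nabla V$) together with $u^\pm=0$ at $x_1\in\{0,L_1\}$; and the diffusion integrates by parts to $-\nu\|\nabla u^\pm\|_{\Lspace^2}^2$. This yields
\[
\tfrac{1}{2}\tfrac{d}{dt}\|\rho-\rho_{ss}\|_{\Lspace^2}^2 = \tfrac{1}{L_1}\int_\Omega E_2(u^+-u^-)\,dx - \nu\|\nabla(\rho-\rho_{ss})\|_{\Lspace^2}^2.
\]

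Next I differentiate the electric energy. Using $-\nabla^2 V=u^++u^-$ and integrating by parts (boundary terms vanish since $V=0$ at $x_1\in\{0,L_1\}$), one gets $\tfrac{1}{2}\tfrac{d}{dt}\int|\nabla V|^2 = \int V\partial_t(u^++u^-)$. Substituting the sum of the two $u^\pm$-equations, the quadratic advection term vanishes since $\nabla\cdot(V E^\perp)=0$ with vanishing boundary contributions; the transport produces $T^+\int E_2 u^+ + T^-\int E_2 u^-$ after integration by parts using $\partial_{x_2}V=-E_2$; and the viscous term, after applying Poisson once more, becomes $-\nu\|\rho^++\rho^--1\|_{\Lspace^2}^2$. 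Forming $\frac{d}{dt}\mathcal{E}_0$ with $\mathcal{E}_0(t):=\|\rho-\rho_{ss}\|_{\Lspace^2}^2-\tfrac{2}{L_1\DT}\int|\nabla V|^2$ and using $\DT=T^+-T^-$, the cross terms collapse into
\[
-\tfrac{2(T^++T^-)}{L_1\DT}\int_\Omega E_2(u^++u^-)\,dx.
\]

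The final step is to show that this integral vanishes: rewriting $E_2=-\partial_{x_2}V$ and $u^++u^-=-\nabla^2 V$, one integration by parts in $x_2$ (periodicity) and one in $x_1$ (Dirichlet BC on $V$) kill both contributions. What remains is exactly
\[
\tfrac{d}{dt}\mathcal{E}_0(t)+2\nu\Bigl[\|\nabla(\rho-\rho_{ss})\|_{\Lspace^2}^2-\tfrac{2}{L_1\DT}\|\rho^++\rho^--1\|_{\Lspace^2}^2\Bigr]=0,
\]
and integrating in time gives \eqref{e:energy}. The main obstacle is precisely this algebraic step: recognizing the weight $-\tfrac{2}{L_1\DT}$ as the unique choice that reassembles the $E_2$ cross terms into a single multiple of $\int_\Omega E_2(u^++u^-)$, and noticing that this integral vanishes purely from the Poisson equation and the boundary conditions. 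Everything else is routine testing against $u^\pm$ and $V$. A minor technical point is that the identity is obtained for sufficiently smooth solutions and then extended to the $L^\infty$ initial data class stated in the lemma by a standard approximation argument, which is why the final statement is written as an inequality rather than an equality.
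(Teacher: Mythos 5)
Your proof is correct and follows essentially the same route as the paper: you use the same energy functional, the same decisive Poisson-plus-boundary-condition identity $\int_\Omega E_2\,\nabla^2 V\,dx=0$ (equivalently $\int E_2 u^+ = -\int E_2 u^-$), and the same integrations by parts. The only difference is organizational: you differentiate the electric energy $\int_\Omega|\nabla V|^2$ directly by testing the summed $\rho^\pm$-equations against $V$ and then observe that the weight $-\tfrac{2}{L_1\Delta T}$ collapses the transport cross terms into a multiple of $\int E_2(u^++u^-)=0$, whereas the paper first invokes the identity $\int E_2 u_1 = -\int E_2 u_2$ and then recovers $\tfrac{d}{dt}\int|\nabla V|^2$ by substituting the evolution equations into $\int E_2 u_1$; your rearrangement is arguably slightly cleaner since it stays in the lab-frame variables of \eqref{eq1}.
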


\begin{remark}
We take the opportunity to point out an error in the energy of \cite[Theorem 5.1]{daniel1}: in equations (5.1) and (5.2) of this paper, there is a factor $2$ which is missing in front of $\int_{\Omega}|\nabla V|^2 dx$. 
\end{remark}

We shall use in the following  Poincar\'e type inequalities:
\begin{lemma}
\label{poinc}
With the same notations as before, we have, for any $t\geq 0$:
\begin{align}
\label{poinc1}
 \Vert \nabla V\Vert_{\Lspace^2}^2 \leq \frac{ 2L_1^2}{\pi^2 }\Vert \rho-\rho_{ss}\Vert_{\Lspace^2}^2, \\
\label{poinc2}
\Vert \rho-\rho_{ss} \Vert_{\Lspace^2}^2\leq  \frac{L_1^2}{\pi^2}  \Vert  \nabla(\rho-\rho_{ss}) \Vert_{\Lspace^2}^2.
\end{align}
\end{lemma}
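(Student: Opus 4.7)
\medskip
The plan is to reduce both inequalities to the standard one-dimensional Poincaré inequality on $[0,L_1]$ with Dirichlet boundary conditions, whose optimal constant $\pi^2/L_1^2$ is the lowest eigenvalue of $-\partial_{x_1}^2$ on $[0,L_1]$ with zero boundary conditions. Since all relevant functions vanish at $x_1=0,L_1$ and are periodic in $x_2$, passing to a Fourier expansion in $x_2$ and applying this 1D Poincaré inequality mode-by-mode is the cleanest route; equivalently, one can invoke directly that for any $u \in H^1_0([0,L_1] \times \R/L_2\Z)$,
\[
\|u\|_{\Lspace^2}^2 \le \frac{L_1^2}{\pi^2}\,\|\partial_{x_1}u\|_{\Lspace^2}^2 \le \frac{L_1^2}{\pi^2}\,\|\nabla u\|_{\Lspace^2}^2.
\]

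For the second inequality \eqref{poinc2}, I would simply apply this bound to $\rho^\pm-\rho_{ss}^\pm$, which both satisfy the required Dirichlet condition at $x_1=0,L_1$, and sum the two resulting estimates.

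For the first inequality \eqref{poinc1}, I would set $f := (\rho^+-\rho_{ss}^+) + (\rho^- - \rho_{ss}^-)$ and exploit $\rho_{ss}^+ + \rho_{ss}^- \equiv 1$ to rewrite the Poisson equation as $-\nabla^2 V = f$. Since $V$ vanishes on $x_1=0,L_1$ and is periodic in $x_2$, integration by parts yields
\[
\|\nabla V\|_{\Lspace^2}^2 = \langle V, f\rangle \le \|V\|_{\Lspace^2}\,\|f\|_{\Lspace^2}.
\]
The 1D Poincaré inequality applied to $V$ gives $\|V\|_{\Lspace^2} \le (L_1/\pi)\|\nabla V\|_{\Lspace^2}$, hence $\|\nabla V\|_{\Lspace^2} \le (L_1/\pi)\|f\|_{\Lspace^2}$. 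Finally, the elementary bound $\|f\|_{\Lspace^2}^2 \le 2(\|\rho^+ - \rho_{ss}^+\|_{\Lspace^2}^2 + \|\rho^- - \rho_{ss}^-\|_{\Lspace^2}^2) = 2\|\rho - \rho_{ss}\|_{\Lspace^2}^2$ produces the factor of $2$ and the claimed constant $2L_1^2/\pi^2$.

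There is no genuine obstacle here, since the estimates are essentially textbook; the only delicate point is to keep track of the correct Poincaré constant for the anisotropic cylindrical geometry (Dirichlet in $x_1$, periodic in $x_2$), but the extremal eigenfunction is purely a function of $x_1$, so the constant is the same $\pi^2/L_1^2$ as on the interval $[0,L_1]$. One must also verify that the sign of $\Delta T$ in the energy identity \eqref{e:energy} plays no role in Lemma~\ref{poinc} itself; the two inequalities are geometric and independent of $\Delta T$, and are used in combination with \eqref{e:energy} to obtain global stability in the subsequent theorem.
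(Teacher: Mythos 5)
Your proof is correct, and for inequality \eqref{poinc1} it takes a genuinely different route from the paper. The paper's proof expands $\rho^+ + \rho^- - 1$ and $V$ explicitly in the orthogonal basis $g_k$ of \eqref{e:gk}, reads off the bound $\|\nabla V\|^2_{\Lspace^2} \le \frac{L_1^2}{\pi^2}\|\rho^+ + \rho^- - 1\|^2_{\Lspace^2}$ mode-by-mode from the Fourier coefficients (the worst mode being $k=(1,0)$), and then applies $(a+b)^2 \le 2(a^2+b^2)$ as you do; inequality \eqref{poinc2} is dismissed as ``similar.'' You instead invoke a duality argument for \eqref{poinc1}: integrating by parts gives $\|\nabla V\|^2_{\Lspace^2} = \langle V, f\rangle$, then Cauchy--Schwarz and the Poincar\'e inequality for $V$ itself in $H^1_0$ close the estimate. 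This avoids writing out the spectral expansion of $V$ entirely and makes transparent that the constant $L_1^2/\pi^2$ is exactly the inverse of the first Dirichlet--periodic eigenvalue of $-\nabla^2$, without needing to exhibit it in the series. The two arguments are of course equivalent---both hinge on the fact that the extremal mode depends only on $x_1$, which you correctly note---but your version is a bit shorter and more portable, since it treats the Poincar\'e inequality as a black box rather than recomputing it through the basis. Your treatment of \eqref{poinc2} is exactly what the paper means by ``similarly.'' One tiny presentational point: the identity $\|\nabla V\|^2_{\Lspace^2} = \langle V, f\rangle$ requires the boundary terms to vanish, which you should flag explicitly (they do, because $V=0$ at $x_1=0,L_1$ and everything is $L_2$-periodic in $x_2$), and you correctly state that assumption.
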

As a consequence of the energy identity, we can prove $\Lspace^2$-return to equilibrium, with exponential (and explicit) speed, for negative or large enough $\Delta T$.
\begin{theorem}
\label{theo:GS}
If  $\Delta T< 0$ or ${\Delta T}>\Delta T_* := \frac{ 4 L_1}{ \pi^2}$, then the steady-state $\rho_{ss}$ is globally asymptotically stable in $\Lspace^2$, with exponential convergence given by $-2\pi^2\frac{\nu}{L_1^2}$.
\end{theorem}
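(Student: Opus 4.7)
The plan is to combine the energy identity of Lemma~\ref{lem:id} with the Poincar\'e inequalities of Lemma~\ref{poinc} to derive a Gronwall-type differential inequality and conclude exponential decay. First, I would differentiate \eqref{e:energy} in time, which gives $\frac{dF}{dt} + 2\nu D(t) \leq 0$, where $F(t) := \|\rho - \rho_{ss}\|_{\Lspace^2}^2 - \frac{2}{L_1\Delta T}\|\nabla V\|_{\Lspace^2}^2$ is the modified energy and $D(t)$ denotes the bracketed dissipation in \eqref{e:energy}.

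The heart of the argument is the key inequality
\begin{equation*}
D(t) \;\geq\; \frac{\pi^2}{L_1^2}\, F(t).
\end{equation*}
In the regime $\Delta T < 0$, both $F$ and $D$ are sums of non-negative terms, and this follows term-by-term: \eqref{poinc2} gives $\|\nabla(\rho-\rho_{ss})\|^2 \geq (\pi^2/L_1^2)\|\rho-\rho_{ss}\|^2$, while the analogous eigenvalue bound $\|\nabla^2 V\|^2 \geq (\pi^2/L_1^2)\|\nabla V\|^2$ (obtained from the fact that $\pi^2/L_1^2$ is the smallest eigenvalue of $-\nabla^2$ under mixed Dirichlet/periodic boundary conditions) handles the other pair.

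The main obstacle is the regime $\Delta T > \Delta T_* = 4L_1/\pi^2$, where the $\nabla V$ and $\nabla^2 V$ contributions appear with negative coefficient and can no longer be absorbed term-wise. For this case I would expand in the orthogonal basis $\{g_k\}_{k\in \N\times\Z}$ from \S\ref{s:spec}: writing $\rho^\pm - \rho_{ss}^\pm = \sum_k c_k^\pm g_k$ and setting $\alpha_k := c_k^+ + c_k^-$, $\beta_k := c_k^+ - c_k^-$, $\mu_k := \pi^2(k_1^2/L_1^2 + 4 k_2^2/L_2^2)$, both $F$ and $D$ diagonalize into sums of mode contributions. A short calculation reduces the desired mode-wise inequality to the elementary algebraic condition
\begin{equation*}
(\mu_k - \pi^2/L_1^2)\bigl(\mu_k - 4/(L_1\Delta T)\bigr) \;\geq\; 0 \quad \text{for all } k.
\end{equation*}
Since $\mu_k \geq \pi^2/L_1^2$, this holds precisely when either $\Delta T < 0$ (second factor trivially positive) or $\pi^2/L_1^2 \geq 4/(L_1\Delta T)$, i.e.\ $\Delta T \geq \Delta T_*$. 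This simultaneously proves the key inequality and identifies the sharpness of $\Delta T_*$: the critical obstruction sits in the unique lowest mode $(k_1,k_2)=(1,0)$, for which $\mu_k$ attains its minimum $\pi^2/L_1^2$.

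Granted the key inequality, Gronwall yields $F(t) \leq F(0)\, e^{-2\pi^2\nu t/L_1^2}$. A final application of \eqref{poinc1} shows that in both parameter regimes $F$ is equivalent to $\|\rho-\rho_{ss}\|_{\Lspace^2}^2$, with $\Delta T$-dependent constants that remain positive thanks to $\Delta T < 0$ or $\Delta T > \Delta T_*$. Thus the decay of $F$ translates into the claimed global $\Lspace^2$-exponential convergence of $\rho$ to $\rho_{ss}$ at rate $2\pi^2\nu/L_1^2$.
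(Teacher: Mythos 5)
Your proposal is correct, and it follows the same overall strategy as the paper: start from the energy identity of Lemma~\ref{lem:id}, bring in the Poincar\'e-type estimates of Lemma~\ref{poinc}, and conclude by Gronwall with the sharp rate $2\pi^2\nu/L_1^2$. What is genuinely different is the middle step. The paper works with the \emph{integral} form of the energy inequality, bounds $\|\rho^+ + \rho^- - 1\|_{\Lspace^2}^2 \leq 2\|\rho - \rho_{ss}\|_{\Lspace^2}^2$ by Cauchy--Schwarz, applies the aggregate Poincar\'e inequalities \eqref{poinc1}--\eqref{poinc2}, and then isolates $\|\rho-\rho_{ss}\|_{\Lspace^2}^2$ on the left to run an integral Gronwall argument; the miraculous cancellation then yields $\gamma = 2\nu\pi^2/L_1^2$. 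You instead pass to a \emph{differential} inequality $\frac{dF}{dt} + 2\nu D \leq 0$ for the modified energy $F$ and prove the coercivity estimate $D \geq (\pi^2/L_1^2)F$ directly by diagonalizing in the basis $\{g_k\}$, where the modewise inequality factors cleanly into
\begin{equation*}
\left(\mu_k - \frac{\pi^2}{L_1^2}\right)\left(\mu_k - \frac{4}{L_1\Delta T}\right) \geq 0.
\end{equation*}
This factorization is a nice structural addition: it makes transparent that the global threshold $\Delta T_*$ is pinned exactly by the lowest Dirichlet mode $(k_1,k_2)=(1,0)$, and it avoids the crude bound $\|u_1+u_2\|_{\Lspace^2}^2 \leq 2\|\rho-\rho_{ss}\|_{\Lspace^2}^2$ by keeping $\|\nabla^2 V\|_{\Lspace^2}^2$ paired with $\|\nabla V\|_{\Lspace^2}^2$ through the exact spectral identity $\|\nabla^2 V\|_{\Lspace^2}^2 \geq (\pi^2/L_1^2)\|\nabla V\|_{\Lspace^2}^2$. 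Since both routes give the same final rate, your refinement does not improve the theorem itself, but it does yield a slightly more elementary and more illuminating derivation. Two minor points worth making explicit in a write-up: (i) the final equivalence $F \sim \|\rho-\rho_{ss}\|_{\Lspace^2}^2$ needs the one-sided bound you use for each sign of $\Delta T$ separately ($F \geq \|\rho-\rho_{ss}\|_{\Lspace^2}^2$ when $\Delta T<0$, and $F \geq (1 - 4L_1/(\pi^2\Delta T))\|\rho-\rho_{ss}\|_{\Lspace^2}^2$ when $\Delta T>\Delta T_*$); and (ii) the $\beta_k$ terms in the modewise inequality give exactly $\mu_k \geq \pi^2/L_1^2$, which is automatic, so the factored condition is indeed the only constraint.
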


\begin{remark}
Notably, the constants in all these results are independent of $L_2$, and the convergence rate is larger on thinner domains (with smaller $L_1$), but also balancing with viscosity.

Recall that by Lemma \ref{l:bound}, for fixed $\ell = L_2/L_1$ the higher instability threshold $\DT_2$ satisfies $\DT_2< \DT_*=\frac{4 L_1}{\pi^2}$, but that $\lim_{\ell \rightarrow + \infty}\Delta T_{2} = \DT_*$ if $\nu=o(\ell^{-1})$. Hence, the global threshold $\DT_*$ is also linked to linear instability.
\end{remark}

\medskip
Let us now Lemma \ref{lem:id}, Lemma \ref{poinc} and Theorem \ref{theo:GS}.

\begin{proof} [Lemma \ref{lem:id}] The proof follows from computations that are similar to those that can be found in \cite{daniel1}, for the model without viscosity (that is $\nu=0$). 
We keep the notations of Section \ref{refor}.

Taking the scalar product with $u:=(u_1,u_2)$ in the transport equations satisfied by $u_1$ and $u_2$ in \eqref{eq2}, and integrating with respect to $x$ entails:
\begin{equation}
\frac{1}{2} \frac{d}{dt} \Vert u \Vert_{\Lspace^2}^2 = \int_{\Omega}\frac{E_2}{L_1} u_1 \,  dx - \int_{\Omega}\frac{E_2}{L_1} u_2 \, dx+ \nu \int_{\Omega}u_1 \nabla^2 u_1 \, dx + \nu \int_{\Omega}u_2 \nabla^2 u_2 \, dx .
\end{equation}
Note indeed that due the periodicity with respect to $x_2$, the following contribution vanishes:
\begin{equation*}
\int_{\Omega}\partial_{x_2} u_1 u_1 dx = \int_{\Omega}\frac{1}{2}\partial_{x_2} u^2_1 dx = 0 = \int_{\Omega}\partial_{x_2} u_2 u_2 dx.
\end{equation*}
Likewise, with Green's Formula, using $\operatorname{div} E^\perp = 0$ and ${E_2}=-\partial_{x_2} V =0$ on $x_1=0,L_1$, we have (for $i=1,2$):
\begin{equation*}
\int_{\Omega}E^\perp \cdot \nabla u_i \, u_i dx = \frac{1}{2}\int_{\Omega}E^\perp \cdot \nabla (u_i)^2dx =0.
\end{equation*}

Recall an identity proved in \cite[Lemma 5.1]{daniel1}:
for any $t>0$, there holds
\begin{equation}
\label{identity}
\int_{\Omega}E_2 u_1 dx=-\int_{\Omega}E_2 u_2 dx.
\end{equation}
For the sake of completeness, we quickly reproduce the proof.
Observe that
\begin{equation}
\begin{aligned}
\int_{\Omega}E_2 \left(u_2  - u_1 \right) dx &=  \int_{\Omega}E_2 \left(u_1 + u_2 -2 u_1 \right)dx \\
&= \int_{\Omega}E_2 \left(-\nabla^2 V-2u_1\right)dx\\
&= -2\int_{\Omega}E_2 u_1dx.
\end{aligned}
\end{equation}
Indeed, relying on the periodicity  in the $x_2$ direction and since $\partial_{x_2} V=0$ on $x_1=0,L$, we get: 
\begin{eqnarray*}
\int_{\Omega}\partial_{x_2}V  \nabla^2 V dx &=& - \int_{\Omega}\partial_{x_2} \nabla V \cdot \nabla  Vdx  + \underbrace{\int_{\Omega}\operatorname{div}(\partial_{x_2}V\nabla V )dx}_{=0}\\
&=&  -  \int_{\Omega}\partial_{x_2}\left( \frac{\vert\nabla V\vert^2}{2}\right)dx \, = \,0 .
\end{eqnarray*}
This completes the proof of \eqref{identity}. Therefore, we have:
$$
\int_{\Omega}-\frac{{E_2}}{L_1} u_1 \,  dx + \int_{\Omega}\frac{E_2}{L_1} u_2 \, dx  = - 2 \int_{\Omega}\frac{E_2}{L_1} u_1 \,  dx .
$$
Now compute, using the equations satisfied by $(u_1,u_2,V)$:
\begin{equation}
\begin{aligned}
\int_{\Omega}{E_2} u_1 \,  dx  &=  \int_{\Omega}V \partial_{x_2} u_1 dx - \underbrace{\int_{\Omega}\operatorname{div}(V u_1 e_2)dx}_{=0}\\
&= \frac{1}{\DT}\int_{\Omega}V\left(\partial_t u_1  + E^\perp \cdot \nabla u_1 - \frac{E_2}{L_1}\right)dx -  \frac{\nu}{\DT} \int_{\Omega}V \nabla^2 u_1 dx \\
&= \frac{1}{\DT}\int_{\Omega}V\left(\partial_t(u_1+u_2) + E^\perp \cdot\nabla (u_1+u_2)\right) dx \\
&\quad +   \frac{1}{\DT} \int_{\Omega}-T^- V \partial_{x_2} u_2 dx  - \frac{\nu}{\DT} \int_{\Omega}V \nabla^2 (u_1 +u_2) dx\\
&= \frac{1}{\DT}\int_{\Omega}-V\left(\partial_t\nabla^2 V - E^\perp \cdot \nabla (\nabla^2 V)\right)dx
- \frac{\nu}{\DT} \int_{\Omega}V \nabla^2 (u_1 +u_2) dx.
\end{aligned}
\end{equation}
Observe  that by Green's formula:
$$
\int_{\Omega}-V\left(\partial_t \nabla^2 V - E^\perp \cdot \nabla (\nabla^2 V)\right)dx = \frac{d}{dt} \frac{1}{2}  \left(\int_{\Omega}\vert \nabla V\vert ^2 dx \right).
$$
Finally, using again \eqref{identity}, we have
\begin{multline*}
\int_{\Omega}\frac{E_2}{L_1} u_1 \,  dx - \int_{\Omega}\frac{E_2}{L_1} u_2 \, dx  =\\
\frac{1}{L_1(T^+-T^-)}\frac{d}{dt}  \left(\int_{\Omega}\vert \nabla V\vert ^2 dx \right) dx-\frac{2\nu}{L_1(T^+-T^-)}\int_{\Omega}V \nabla^2 (u_1+ u_2) dx.
\end{multline*}
Note that using Green's formula and the Poisson equation satisfied by $V$,  we have the identities:
$$
\int_{\Omega}V \nabla^2 (u_1+ u_2) dx= \int_{\Omega} \nabla^2 V  (u_1+ u_2) dx= - \| u_1 + u_2 \|^2_{\Lspace^2}.
$$
and
$$
\nu \int_{\Omega}u_1 \nabla^2 u_1 \, dx + \nu \int_{\Omega}u_2 \nabla^2 u_2 \, dx = -\nu \int_{\Omega}|\nabla u_1|^2 \, dx  - \nu \int_{\Omega}|\nabla u_2|^2 \, dx  $$
Gathering all pieces together, we have proved that $\frac{d}{dt}\mathcal{E}(t)=0$.
\end{proof}

Let us now prove the Poincar\'e inequalities of Lemma \ref{poinc}.
\begin{proof}[Lemma \ref{poinc}]
We only prove \eqref{poinc1} (\eqref{poinc2} can be treated similarly).
Using the orthogonal basis \eqref{e:gk}, we write:
$$
u_1 + u_2 =  \sum_{k_1\in \mathbb{N}_*,k_2 \in \mathbb{Z}}  a_{k_1,k_2} g_k.
$$
Recall the Poisson equation satisfied by $V$:
$$
-\nabla^2 V = u_1 + u_2.
$$
This yields:
\begin{equation*}
\begin{aligned}
V &=  \sum_{k_1 \in \mathbb{N}_*,k_2 \in \mathbb{Z}}  \frac{1}{\pi^2 \left( \frac{k_1^2}{L_1^2} + \frac{4 k_2^2}{L_2^2}   \right)}  a_{k_1,k_2}  g_k, \\ 
\nabla V &=  \sum_{k_1 \mathbb{N}_*,k_2 \in \mathbb{Z}}  \frac{1}{\pi^2 \left( \frac{k_1^2}{L_1^2} + \frac{4 k_2^2}{L_2^2}   \right)}  a_{k_1,k_2}  \pi \begin{pmatrix} \frac{k_1 }{L_1} \phi_k\\ \frac{2 \rmi k_2 }{L_2} g_k \end{pmatrix}.
\end{aligned}
\end{equation*}
Therefore,
\begin{equation}
\begin{aligned}
\int_{\Omega} |\nabla V|^2 dx &\leq   \frac{L_1^2}{\pi^2} \| u_1 + u_2\|_{\Lspace^2}^2 \leq   \frac{2L_1^2}{\pi^2} ( \| u_1\|_{\Lspace^2}^2 + \|u_2\|_{\Lspace^2}^2),
\end{aligned}
\end{equation}
which proves \eqref{poinc1}.
\end{proof}

Gathering all pieces together, we can now prove Theorem \ref{theo:GS}.
\begin{proof}[Theorem \ref{theo:GS}]
Using the energy identity \eqref{e:energy} and applying the Poincar\'e inequality \eqref{poinc1} we get:
\begin{multline*}
 \Vert \rho-\rho_{ss}\Vert_{\Lspace^2}^2 \leq \Vert \rho(0)-\rho_{ss} \Vert_{\Lspace^2}^2 + \frac{2}{L_1 \Delta T}\int_{\Omega} \vert \nabla V\vert ^2 dx \\
 -2\nu \int_0^t\left[ \frac{-4}{L_1 \Delta T} \| \rho -\rho_{ss}\|_{\Lspace^2} ^2 + \| \nabla (\rho-\rho_{ss})\|_{\Lspace^2}^2 \right] ds\\
\leq \Vert \rho(0)-\rho_{ss} \Vert_{\Lspace^2}^2 + \frac{1}{ L_1\Delta T} \frac{ 4L_1^2}{\pi^2}\Vert \rho-\rho_{ss} \Vert_{\Lspace^2}^2 \\
-2\nu \int_0^t\left[ \frac{-4}{L \Delta T} \| \rho -\rho_{ss}\|_{\Lspace^2} ^2 +  \| \nabla (\rho-\rho_{ss})\|_{\Lspace^2}^2 \right] ds.
\end{multline*}
Hence, using the Poincar\'e inequality \eqref{poinc2},
\begin{multline*}
\left(1-\frac{4 L_1}{ \pi^2\Delta T}  \right)\Vert \rho-\rho_{ss} \Vert_{\Lspace^2}^2
 \\
 \leq \Vert \rho(0)-\rho_{ss} \Vert_{\Lspace^2}^2 +2\nu  \left(\frac{4}{L_1 \Delta T} - \frac{\pi^2}{L_1^2}\right)    \int_0^t  \| \rho-\rho_{ss}\|_{\Lspace^2}^2 ds .
\end{multline*}
As a consequence, by Gronwall inequality, we obtain $\Lspace^2$-stability and $\Lspace^2$-return to equilibrium, provided that 
\[
{\Delta T}< 0, \text{  or  } {\Delta T}>\frac{ 4 L_1}{\pi^2},
\]
which in particular implies $ \frac{\pi^2}{L_1^2}- \frac{4}{L_1 \Delta T}>0$. More specifically, we have:
\begin{equation}
\Vert \rho-\rho_{ss} \Vert_{\Lspace^2}^2 \leq \Vert \rho(0)-\rho_{ss} \Vert_{\Lspace^2}^2  \exp\left(- \gamma t\right),
\end{equation}
with $\gamma :=   2 \nu  \left( \frac{\pi^2}{L_1^2 }- \frac{4}{L_1 \Delta T}\right) \left(1-\frac{4 L_1}{ \pi^2\Delta T}  \right)^{-1}\; = 2\nu\frac{\pi^2}{L_1^2} >0$.
\end{proof}

\end{document}